\documentclass[preprint,times,3p]{elsarticle}

\usepackage{amsmath}
\usepackage{amssymb}
\usepackage{amsfonts}
\usepackage{graphicx}
\usepackage{epstopdf}
\usepackage{color}
\usepackage{bm}
\usepackage{multirow}
\usepackage{natbib}
\usepackage{hyperref}
\usepackage{cleveref}
\usepackage{amsthm}
\usepackage{lineno} 
\nolinenumbers

\usepackage{color}
\usepackage{float}
\usepackage{caption}

\usepackage[caption=false]{subfig}

\usepackage{ifpdf}
\ifpdf%
\usepackage{pdflscape}
\else
\usepackage{lscape}
\fi

\hypersetup{
	colorlinks,
	linkcolor={red!50!black},
	citecolor={blue!50!black},
	urlcolor={blue!80!black}
}

\newtheorem{remark}{Remark} 
\newtheorem{lemma}{Lemma}

\def\a{\mbox{\boldmath $a$}}

\def\e{\mbox{\boldmath $e$}}
\def\f{\mbox{\boldmath $f$}}

\def\g{\mbox{\boldmath $g$}}

\def\m{\mbox{\boldmath $m$}}
\def\n{\mbox{\boldmath $n$}}

\def\x{\mbox{\boldmath $x$}}

\def\0{\mbox{\boldmath $0$}}

\begin{document}

\begin{frontmatter}

\title{A New Fractional Step Structure Preserving Method for The Landau-Lifshitz-Gilbert Equation}

\author[XJTLU]{Changjian Xie\corref{cor}}
\cortext[cor]{Corresponding author.} 
\ead{Changjian.Xie@xjtlu.edu.cn}



\address[XJTLU]{School of Mathematics and Physics, Xi'an-Jiaotong-Liverpool University, Re'ai Rd. 111, Suzhou, 215123, Jiangsu, China.}

\begin{abstract}
In this paper, we propose a structure preserving method using a Crank-Nicolson's type method with an implicit Gauss-Seidel fractional iteration. Such a method is of first-order accuracy in time and second-order accuracy in space, stable and length preserving. Such kind of method brings great benefits for the theoretical analysis. The numerical accuracy, norm preserving and stability are verified for 1D and 3D tests. 

\end{abstract}

\begin{keyword}
{Landau-Lifshitz-Gilbert equation\sep structure preserving method\sep energy stability}
\end{keyword}

\end{frontmatter}

\section{Introduction}

The Landau-Lifshitz-Gilbert (LLG) equation \cite{Landau1935On,Gilbert:1955} incorporates two fundamental terms governing magnetization dynamics: a gyromagnetic term, responsible for energy conservation, and a damping term, which models energy dissipation. 
The LLG equation constitutes a vector-valued, nonlinear system characterized by a point-wise constant magnitude constraint on the magnetization vector. The model without damping is given by
\begin{align*}
    \m_t=-\m\times \Delta \m.
\end{align*}
Significant research efforts have been dedicated to devising efficient and numerically stable methods for micromagnetic simulations, as summarized in review articles such as \cite{kruzik2006recent,cimrak2007survey}. 
Such a equation can be solved with Crank-Nicolson's method (midpoint type) below,
\begin{align*}
    \frac{\m_h^{n+1}-\m_h^n}{\Delta t}=-\frac{\m_h^{n+1}+\m_h^n}{2}\times \Delta_h \left(\frac{\m_h^{n+1}+\m_h^n}{2}\right),
\end{align*}
which is a nonlinear scheme with great stability. However, this paper focus on the linearity, efficiency, stability and structure preservation. 
A notable limitation persists: the implicit Crank-Nicolson's method \cite{JEONG2010613} involves a nonlinear system with the norm preserving property. In this paper, we use such a method and such property to construct a structure preserving method based on implicit Gauss-Seidel iteration.

The rest of this paper is organized as follows. \cref{sec: numerical scheme} begins with a review of the micromagnetic model, followed by a detailed description of the proposed numerical scheme. \cref{sec:experiments} presents extensive numerical results, encompassing verification of temporal and spatial accuracy in one-dimensional (1D) and three-dimensional (3D) settings. 
Concluding remarks and potential future research directions are provided in \cref{sec:conclusions}.

\section{The governing equation and numerical scheme}
\label{sec: numerical scheme}


The Landau-Lifshitz-Gilbert (LLG) equation forms the fundamental basis of micromagnetics, providing a rigorous description of the spatiotemporal evolution of magnetization in ferromagnetic materials by incorporating two key physical phenomena: gyromagnetic precession and dissipative relaxation \cite{Landau1935On,Brown1963micromagnetics}. In nondimensional form, this governing equation is expressed as
\begin{align}\label{eq-5}
\m_t=-\m\times\Delta\m-\alpha\m\times(\m\times\Delta\m)
\end{align}
subject to the homogeneous Neumann boundary condition
\begin{equation}\label{boundary-large}
\frac{\partial{\m}}{\partial {\bm \nu}}\Big|_{\partial \Omega}=0,
\end{equation}
where \(\Omega \subset \mathbb{R}^d\) (\(d=1,2,3\)) represents the bounded domain of the ferromagnetic material, and \(\bm \nu\) is the unit outward normal vector on the boundary \(\partial \Omega\). This boundary condition ensures no magnetic surface charge, a physically appropriate assumption for isolated ferromagnetic systems.

The magnetization field \(\m: \Omega \to \mathbb{R}^3\) is a three-dimensional vector field satisfying the pointwise constraint \(|\m| = 1\),  stemming from the quantum mechanical alignment of electron spins in ferromagnets. The first term on the right-hand side of \cref{eq-5} describes gyromagnetic precession, where magnetic moments precess around the exchange field $\Delta \m$. The second term represents dissipative relaxation, with \(\alpha > 0\) being the dimensionless Gilbert damping coefficient that governs the rate of energy dissipation into the lattice.

For the construction of the structure preserving method, we set $\alpha=0$ for \cref{eq-5}, we have the LLG equation below,
\begin{align}\label{eq-alpha-0}
\m_t=-\m\times\Delta\m.
\end{align}
It is obvious that \cref{eq-alpha-0} has a length-preserving property during the evolution process. To see this, we do scalar multiplication of \cref{eq-alpha-0} with $\m$:
\begin{align*}
    \frac{\partial \m}{\partial t} \cdot \m = -(\m \times \Delta \m) \cdot \m = 0.
\end{align*}
Then we have 
\begin{align*}
    \frac{\partial|\m|^2}{\partial t}=0,
\end{align*}
which implies $|\m(\x,t)|$ is constant for all $t$ and $\x$. And we assume that $|\m(\x,0)|=1$. Let 
Let \( E(\mathbf{m}(\mathbf{x}, t)) \) be an energy defined by \( E(\mathbf{m}(t)) := \|\nabla \mathbf{m}(t)\|_{L^2(\Omega)}^2 \). By taking an inner product of \cref{eq-alpha-0} with \( \Delta \mathbf{m} \), we obtain
\begin{align}\label{eq-3}
    \frac{\partial {\m}}{\partial t} \cdot \Delta {\m} = -({\m} \times \Delta {\m}) \cdot \Delta {\m} = 0. 
\end{align}
Using homogeneous Neumann or periodic boundary conditions, from \cref{eq-3} we have
\begin{align*}
    \begin{aligned}
0 &= \int_\Omega \frac{\partial {\m}}{\partial t} \cdot \Delta {\m}\, dx
   = \int_{\partial\Omega} \frac{\partial {\m}}{\partial t} \cdot \frac{\partial {\m}}{\partial {\n}}\, ds
   - \int_\Omega \nabla \frac{\partial {\m}}{\partial t} : \nabla {\m}\, dx \\
&= -\frac{1}{2} \frac{dE({\m}(t))}{dt},
\end{aligned}
\end{align*}
which implies that \(E({\m}(t))\) is constant and this problem has an energy conservation property. Here, \({\n}\) is a unit normal vector to \(\partial\Omega\) and the operator `:' is defined as \(A:B = \sum_{ij} a_{ij} b_{ij}\).
 
If we consider the simple linear vectorial equation 
\begin{align}\label{eq-CN}
\m_t=-\m\times \a,
\end{align}
where $\a^T=(a_1,a_2,a_3)$ is a constant vector. We use the Crank-Nicolson method to \cref{eq-CN}, we have
\begin{align}\label{eq-CN_1}
	\frac{\m_h^{n+1}-\m_h^n}{\Delta t}=-\frac{\m_h^{n+1}+\m_h^n}{2} \times \a,
\end{align}
which is norm preserving, since that if $\m_h^{n+1}+\m_h^n$ do the inner product for both sides, leading to
\begin{align*}
	\|\m_h^{n+1}\|_2=\|\m_h^n\|_2.
\end{align*}
After rearranging the compact form for \cref{eq-CN_1}, we have
\begin{align*}
	\begin{pmatrix}
	1&\frac12 \Delta t a_3&-\frac12 \Delta t a_2\\
	-\frac12 \Delta ta_3&1&\frac12 \Delta ta_1\\
	\frac12 \Delta ta_2&-\frac12 \Delta t a_1&1
	\end{pmatrix}\begin{pmatrix}
	m_1^{n+1}\\
	m_2^{n+1}\\
	m_3^{n+1}
	\end{pmatrix}=\begin{pmatrix}
	m_1^n+\frac12 \Delta t(a_2 m_3^n-a_3 m_2^n)\\
	m_2^n+\frac12 \Delta t(a_3 m_1^n-a_1 m_3^n)\\
	m_3^n+\frac12 \Delta t(a_1m_2^n-a_2 m_1^n)
	\end{pmatrix}
\end{align*}
We then have another form
\begin{align*}
	\begin{pmatrix}
	m_1^{n+1}\\
	m_2^{n+1}\\
	m_3^{n+1}
	\end{pmatrix}=\begin{pmatrix}
	1&\frac12 \Delta t a_3&-\frac12 \Delta t a_2\\
	-\frac12 \Delta ta_3&1&\frac12 \Delta ta_1\\
	\frac12 \Delta ta_2&-\frac12 \Delta t a_1&1
	\end{pmatrix}^{-1}\begin{pmatrix}
	1&-\frac12 \Delta t a_3 & \frac12 \Delta t a_3\\
	\frac12 \Delta t a_3&1& -\frac12 \Delta t a_1\\
	-\frac12 \Delta t a_2&\frac12 \Delta t a_1&1
	\end{pmatrix}\begin{pmatrix}
	m_1^n\\
	m_2^n\\
	m_3^n
	\end{pmatrix}=A\begin{pmatrix}
	m_1^n\\
	m_2^n\\
	m_3^n
	\end{pmatrix}
\end{align*}
where 
\begin{align*}
	A=\frac{1}{S}\begin{pmatrix}
	1+\beta^{2}a_1^{2}&-2\beta a_3+\beta^{2}a_1a_2&2\beta a_2+\beta^{2}a_1a_3\\
	2\beta a_3+\beta^{2}a_1a_2&1+\beta^{2}a_2^{2}&-2\beta a_1+\beta^{2}a_2a_3\\
	-2\beta a_2+\beta^{2}a_1a_3&2\beta a_1+\beta^{2}a_2a_3&1+\beta^{2}a_3^{2}
	\end{pmatrix}
\end{align*}
where $S=\det(A)=1+\beta^2(a_1^2+a_2^2+a_3^2)$ and $\beta=\frac{\Delta t}{2}$. The spectral analysis for the matrix $A$ is given by our previous work.

For \cref{eq-alpha-0}, we propose the following structure preserving schemes, which is a semi-implicit method:
	\begin{align}\label{eq-CN_4}
	\frac{\m_h^{n+1}-\m_h^n}{\Delta t}=-\frac{\m_h^{n+1}+\m_h^n}{2} \times \Delta_h \g_h,
	\end{align}
	where $\g_h^{s}=(I-\Delta t \Delta_h)^{-1}\m_h^s,\; s=n,n+1$. To be specific, we propose
	\begin{align*}
	\begin{pmatrix}
	1&\frac12 \Delta t \Delta_hg_3&-\frac12 \Delta t \Delta_hg_2\\
	-\frac12 \Delta t\Delta_hg_3&1&\frac12 \Delta t\Delta_hg_1\\
	\frac12 \Delta t\Delta_hg_2&-\frac12 \Delta t\Delta_h g_1&1
	\end{pmatrix}\begin{pmatrix}
	m_1^{n+1}\\
	m_2^{n+1}\\
	m_3^{n+1}
	\end{pmatrix}=\begin{pmatrix}
	m_1^n+\frac12 \Delta t(\Delta_hg_2 m_3^n-\Delta_hg_3 m_2^n)\\
	m_2^n+\frac12 \Delta t(\Delta_hg_3 m_1^n-\Delta_hg_1 m_3^n)\\
	m_3^n+\frac12 \Delta t(\Delta_hg_1m_2^n-\Delta_hg_2 m_1^n)
	\end{pmatrix}
	\end{align*}
	The trick is to handling the $\g$ evaluate at $t_n$ or $t_{n+1}$. If $\g$ evaluate at $t_n$, we have
	\begin{align*}
	\begin{pmatrix}
	1&\frac12 \Delta t \Delta_hg_3^n&-\frac12 \Delta t \Delta_hg_2^n\\
	-\frac12 \Delta t\Delta_hg_3^n&1&\frac12 \Delta t\Delta_hg_1^n\\
	\frac12 \Delta t\Delta_hg_2^n&-\frac12 \Delta t \Delta_hg_1^n&1
	\end{pmatrix}\begin{pmatrix}
	m_1^{n+1}\\
	m_2^{n+1}\\
	m_3^{n+1}
	\end{pmatrix}=\begin{pmatrix}
	m_1^n+\frac12 \Delta t(\Delta_hg_2^n m_3^n-\Delta_hg_3^n m_2^n)\\
	m_2^n+\frac12 \Delta t(\Delta_hg_3^n m_1^n-\Delta_hg_1^n m_3^n)\\
	m_3^n+\frac12 \Delta t(\Delta_hg_1^n m_2^n-\Delta_hg_2^n m_1^n)
	\end{pmatrix}
	\end{align*}
	which is proved to be mildly better than CFL condition. We choose the 1D exact solution as an example, and get the results presented in \Cref{tab-a-E-4}.
    \begin{table}[htbp]
	\centering
	\caption{The explicit scheme when $h = 5D-4$, $T=1d-1$ in 1D.}\label{tab-a-E-4}
	\begin{tabular}{|c|c|c|c|c|}
		\hline
		$k$ & $\|\m_h-\m_e\|_\infty$ & $\|\m_h-\m_e\|_2$ &$\|\m_h-\m_e\|_{H^1}$ & CPU time (s)\\
		\hline
		2.0D-2 & 0.018000157890128 &0.013706389798827&0.109658855967163&0.066217 \\
		1.0D-2 &0.012210502256479&0.008808376100206&0.069865327666244&0.115753 \\
		5.0D-3 &0.006883104572874&0.004488186175718&0.033485647423559&0.195630\\
		2.5D-3 &0.003297684177562&0.002226171926094&0.016241611196008&0.317750\\
		1.25D-3 &0.006813614081391&0.002600274876724&3.155993419074890&0.733073\\
		6.25D-4 &1.908302419811126 &1.414146167531504&1.379341310610379e+03&1.675463\\
		\hline
	\end{tabular}
\end{table}

    We propose a structure preserving method below:
    \begin{itemize}
        \item step 1. Solving the first linear system below,
        \begin{align*}
	\begin{pmatrix}
	1&\frac12 \Delta t \Delta_hg_3^n&-\frac12 \Delta t \Delta_hg_2^n\\
	0&1&0\\
	0&0&1
	\end{pmatrix}\begin{pmatrix}
	m_1^{n+1}\\
	m_2^{n+1}\\
	m_3^{n+1}
	\end{pmatrix}=\begin{pmatrix}
	m_1^n+\frac12 \Delta t(\Delta_hg_2^n m_3^n-\Delta_hg_3^n m_2^n)\\
	m_2^n+\frac12 \Delta t(\Delta_hg_3^n m_1^n-\Delta_hg_1^n m_3^n)\\
	m_3^n+\frac12 \Delta t(\Delta_hg_1^n m_2^n-\Delta_hg_2^n m_1^n)
	\end{pmatrix}
	\end{align*}
    \item step 2. Solving the second linear system below,
	\begin{align*}
	\begin{pmatrix}
	1&\frac12 \Delta t \Delta_hg_3^n&-\frac12 \Delta t \Delta_hg_2^n\\
	-\frac12 \Delta t\Delta_hg_3^n&1&\frac12 \Delta t\Delta_hg_1^{n+1}\\
	0&0&1
	\end{pmatrix}\begin{pmatrix}
	m_1^{n+1}\\
	m_2^{n+1}\\
	m_3^{n+1}
	\end{pmatrix}=\begin{pmatrix}
	m_1^n+\frac12 \Delta t(\Delta_hg_2^n m_3^n-\Delta_hg_3^n m_2^n)\\
	m_2^n+\frac12 \Delta t(\Delta_hg_3^n m_1^n-\Delta_hg_1^{n+1} m_3^n)\\
	m_3^n+\frac12 \Delta t(\Delta_hg_1^{n+1}m_2^n-\Delta_hg_2^n m_1^n)
	\end{pmatrix}
	\end{align*}
     \item step 3. Solving the third linear system below,
	\begin{align}\label{eq-step-3}
	\begin{pmatrix}
	1&\frac12 \Delta t \Delta_hg_3^n&-\frac12 \Delta t \Delta_hg_2^{n+1}\\
	-\frac12 \Delta t\Delta_hg_3^n&1&\frac12 \Delta t\Delta_hg_1^{n+1}\\
	\frac12 \Delta t\Delta_hg_2^{n+1}&-\frac12 \Delta t\Delta_h g_1^{n+1}&1
	\end{pmatrix}\begin{pmatrix}
	m_1^{n+1}\\
	m_2^{n+1}\\
	m_3^{n+1}
	\end{pmatrix}=\begin{pmatrix}
	m_1^n+\frac12 \Delta t(\Delta_hg_2^{n+1} m_3^n-\Delta_hg_3^n m_2^n)\\
	m_2^n+\frac12 \Delta t(\Delta_hg_3^n m_1^n-\Delta_hg_1^{n+1} m_3^n)\\
	m_3^n+\frac12 \Delta t(\Delta_hg_1^{n+1}m_2^n-\Delta_hg_2^{n+1} m_1^n)
	\end{pmatrix}
	\end{align}
    which is proved to be unconditionally stable and norm preserving with $\|\m_h^{n+1}\|_2=\|\m_h^n\|_2$.
    \end{itemize}

\begin{remark}
If we consider the explicit treatment for $\Delta \m$, we get 
	\begin{align}\label{eq-CN_2}
	\frac{\m_h^{n+1}-\m_h^n}{\Delta t}=-\frac{\m_h^{n+1}+\m_h^n}{2} \times \Delta_h \m_h^n,
	\end{align}
	which gives a CFL-type condition for the stability.
If we consider the implicit treatment for $\Delta \m$, we get
	\begin{align}\label{eq-CN_3}
	\frac{\m_h^{n+1}-\m_h^n}{\Delta t}=-\frac{\m_h^{n+1}+\m_h^n}{2} \times \Delta_h \m_h^{n+1},
	\end{align}
	which poses a difficulty for the nonlinear systems with high complexity.
\end{remark}

The proposed method for \cref{eq-5} with damping term is based on the semi-implicit method below,
	\begin{align}\label{eq-CN_4}
	\frac{\m_h^{n+1}-\m_h^n}{\Delta t}=-\frac{\m_h^{n+1}+\m_h^n}{2} \times \Delta_h \g_h-\alpha\frac{\m_h^{n+1}+\m_h^n}{2} \times (\m_h^n\times \Delta_h \g_h),
	\end{align}
	where $\g_h^{s}=(I-\Delta t \Delta_h)^{-1}\m_h^s,\; s=n,n+1$. 
We propose a structure preserving method below:
    \begin{itemize}
        \item step 1. Solving the first linear system below,
        \begin{align*}
	\begin{pmatrix}
	1&\frac12 \Delta t h_3^n&-\frac12 \Delta t h_2^n\\
	0&1&0\\
	0&0&1
	\end{pmatrix}\begin{pmatrix}
	m_1^{n+1}\\
	m_2^{n+1}\\
	m_3^{n+1}
	\end{pmatrix}=\begin{pmatrix}
	b_1^n\\
	b_2^n\\
	b_3^n
	\end{pmatrix}
	\end{align*}
    where 
    \begin{align*}
h_2^n&=\Delta_h g_2^{n}+\alpha \left[m_3^n \cdot \Delta_hg_1^{n}-m_1^n\cdot \Delta_h g_3^{n}\right],\\
h_3^n&=\Delta_h g_3^{n}+\alpha \left[m_1^n \cdot \Delta_hg_2^{n}-m_2^n\cdot \Delta_h g_1^{n}\right]
\end{align*}
and 
\begin{align*}
b_1^n&=m_1^n+\frac12 k [( \Delta_h g_2^{n})m_3^n-( \Delta_h g_3^{n})m_2^n]\\
&\quad+\frac12 \alpha k [(m_3^n(\Delta_h g_1^{n})-m_1^n(\Delta_h g_3^{n}))m_3^n-(m_1^n(\Delta_h g_2^{n})-m_2^n(\Delta_h g_1^{n}))m_2^n]\\
b_2^n&=m_2^n+\frac12 k [(\Delta_h g_3^{n})m_1^n-(\Delta_h g_1^{n})m_3^n]\\
&\quad+\frac12 \alpha k [(m_1^n(\Delta_h g_2^{n})-m_2^n(\Delta_h g_1^{n}))m_1^n-(m_2^n(\Delta_h g_3^{n})-m_3^n(\Delta_h g_2^{n}))m_3^n]\\
b_3^n&=m_3^n+\frac12 k [(\Delta_h g_1^{n})m_2^n-(\Delta_h g_2^{n})m_1^n]\\
&\quad+\frac12 \alpha k [(m_2^n(\Delta_h g_3^{n})-m_3^n(\Delta_h g_2^{n}))m_2^n-(m_3^n(\Delta_h g_1^{n})-m_1^n(\Delta_h g_3^{n}))m_1^n].
\end{align*}
    \item step 2. Solving the second linear system below,
	\begin{align*}
	\begin{pmatrix}
	1&\frac12 \Delta t h_3^n&-\frac12 \Delta t h_2^n\\
	-\frac12 \Delta th_3^n&1&\frac12 \Delta th_1^{n+1}\\
	0&0&1
	\end{pmatrix}\begin{pmatrix}
	m_1^{n+1}\\
	m_2^{n+1}\\
	m_3^{n+1}
	\end{pmatrix}=\begin{pmatrix}
	b_1^n\\
	b_2^n\\
	b_3^n
	\end{pmatrix}
	\end{align*}
        where 
    \begin{align*}
h_1^{n+1}&=\Delta_h g_1^{n+1}+\alpha \left[m_2^n \cdot \Delta_hg_3^{n}-m_3^n\cdot \Delta_h g_2^{n}\right]\\
h_2^n&=\Delta_h g_2^{n}+\alpha \left[m_3^n \cdot \Delta_hg_1^{n}-m_1^n\cdot \Delta_h g_3^{n}\right],\\
	h_3^n&=\Delta_h g_3^{n}+\alpha \left[m_1^n \cdot \Delta_hg_2^{n}-m_2^n\cdot \Delta_h g_1^{n}\right]
\end{align*}
and 
\begin{align*}
b_1^n&=m_1^n+\frac12 k [( \Delta_h g_2^{n})m_3^n-( \Delta_h g_3^{n})m_2^n]\\
&\quad+\frac12 \alpha k [(m_3^n(\Delta_h g_1^{n+1})-m_1^n(\Delta_h g_3^{n}))m_3^n-(m_1^n(\Delta_h g_2^{n})-m_2^n(\Delta_h g_1^{n+1}))m_2^n]\\
b_2^n&=m_2^n+\frac12 k [(\Delta_h g_3^{n})m_1^n-(\Delta_h g_1^{n+1})m_3^n]\\
&\quad+\frac12 \alpha k [(m_1^n(\Delta_h g_2^{n})-m_2^n(\Delta_h g_1^{n+1}))m_1^n-(m_2^n(\Delta_h g_3^{n})-m_3^n(\Delta_h g_2^{n}))m_3^n]\\
b_3^n&=m_3^n+\frac12 k [(\Delta_h g_1^{n+1})m_2^n-(\Delta_h g_2^{n})m_1^n]\\
&\quad+\frac12 \alpha k [(m_2^n(\Delta_h g_3^{n})-m_3^n(\Delta_h g_2^{n}))m_2^n-(m_3^n(\Delta_h g_1^{n+1})-m_1^n(\Delta_h g_3^{n}))m_1^n].
\end{align*}
     \item step 3. Solving the third linear system below,
	\begin{align*}
	\begin{pmatrix}
	1&\frac12 \Delta t h_3^n&-\frac12 \Delta t h_2^{n+1}\\
	-\frac12 \Delta th_3^n&1&\frac12 \Delta th_1^{n+1}\\
	\frac12 \Delta th_2^{n+1}&-\frac12 \Delta th_1^{n+1}&1
	\end{pmatrix}\begin{pmatrix}
	m_1^{n+1}\\
	m_2^{n+1}\\
	m_3^{n+1}
	\end{pmatrix}=\begin{pmatrix}
	b_1^n\\
	b_2^n\\
	b_3^n
	\end{pmatrix}
	\end{align*}
            where 
    \begin{align*}
h_1^{n+1}&=\Delta_h g_1^{n+1}+\alpha \left[m_2^n \cdot \Delta_hg_3^{n}-m_3^n\cdot \Delta_h g_2^{n+1}\right]\\
h_2^{n+1}&=\Delta_h g_2^{n+1}+\alpha \left[m_3^n \cdot \Delta_hg_1^{n+1}-m_1^n\cdot \Delta_h g_3^{n}\right],\\
	h_3^n&=\Delta_h g_3^{n}+\alpha \left[m_1^n \cdot \Delta_hg_2^{n+1}-m_2^n\cdot \Delta_h g_1^{n+1}\right]
\end{align*}
and 
\begin{align*}
b_1^n&=m_1^n+\frac12 k [( \Delta_h g_2^{n+1})m_3^n-( \Delta_h g_3^{n})m_2^n]\\
&\quad+\frac12 \alpha k [(m_3^n(\Delta_h g_1^{n+1})-m_1^n(\Delta_h g_3^{n}))m_3^n-(m_1^n(\Delta_h g_2^{n+1})-m_2^n(\Delta_h g_1^{n+1}))m_2^n]\\
b_2^n&=m_2^n+\frac12 k [(\Delta_h g_3^{n})m_1^n-(\Delta_h g_1^{n+1})m_3^n]\\
&\quad+\frac12 \alpha k [(m_1^n(\Delta_h g_2^{n+1})-m_2^n(\Delta_h g_1^{n+1}))m_1^n-(m_2^n(\Delta_h g_3^{n})-m_3^n(\Delta_h g_2^{n+1}))m_3^n]\\
b_3^n&=m_3^n+\frac12 k [(\Delta_h g_1^{n+1})m_2^n-(\Delta_h g_2^{n+1})m_1^n]\\
&\quad+\frac12 \alpha k [(m_2^n(\Delta_h g_3^{n})-m_3^n(\Delta_h g_2^{n+1}))m_2^n-(m_3^n(\Delta_h g_1^{n+1})-m_1^n(\Delta_h g_3^{n}))m_1^n].
\end{align*}
    which is proved to be unconditionally stable and norm preserving with $\|\m_h^{n+1}\|_2=\|\m_h^n\|_2$.
    \end{itemize}









\section{Scheme Property}
For simplicity, we analyze the case without damping. Combine with \cref{eq-CN_4} and \cref{eq-step-3}, we update satbilized $g_2^{n+1}$ and $g_3^{n+1}$ by first two steps. We still can use the \cref{eq-CN_4} and take the inner product with $\m_h^{n+1}+\m_h^n$. Hence we have
\begin{align*}
    |\m_h^{n+1}|^2=|\m_h^{n}|^2.
\end{align*}

\begin{remark}
    Notice that the Crank-Nicolson's scheme below,
\begin{align}\label{eq-6}
    \frac{{\m}^{n+1} - {\m}^n}{\Delta t} = -\frac{\m_h^{n+1}+\m_h^n}{2} \times \Delta_h \frac{\m_h^{n+1}+\m_h^n}{2} .
\end{align}
We can easily verify the scheme conserves the magnitude of magnetization. On the other hand, forming an inner product between \cref{eq-6} and $\Delta_h(\m_h^{n+1}+\m_h^n)$, we have
\begin{align}\label{eq-8}
    \frac{{\m}^{n+1} - {\m}^n}{\Delta t} \cdot \Delta_h (\m_h^{n+1}+\m_h^n) = -\left(\frac{\m_h^{n+1}+\m_h^n}{2} \times \Delta_h \frac{\m_h^{n+1}+\m_h^n}{2} \right)\cdot \Delta_h (\m_h^{n+1}+\m_h^n)=0 .
\end{align}
Summation \cref{eq-8} over $i=1,\cdots,N_x$, we have
\begin{align*}
   0=\sum_{i=1}^{N_x}\frac{{\m}_i^{n+1} - {\m}_i^n}{\Delta t} \cdot \frac{1}{h_x^2}\left(\m_{i+1}^{n+1}+\m_{i}^n-2(\m_i^{n+1}+\m_i^n)+\m_{i-1}^{n+1}+\m_{i-1}^n\right)
\end{align*}
where the periodic boundary condition is applied.
Now we get 
\begin{align*}
    \sum_{i=1}^{N_x} \left( 2{\m}_i^{n+1} \cdot {\m}_{i+1}^{n+1} - 2{\m}_i^n \cdot {\m}_{i+1}^n \right) = 0.
\end{align*}
Now, we get the following energy conservation:
\begin{align*}
    \begin{aligned}
E({\m}^{n+1}) - E({\m}^n) 
&= \frac{1}{h} \sum_{i=1}^{N_x} \left( |{\m}_{i+1}^{n+1} - {\m}_i^{n+1}|^2 - |{\m}_{i+1}^n - {\m}_i^n|^2 \right) \\
&= \frac{1}{h} \sum_{i=1}^{N_x} \left( |{\m}_{i+1}^{n+1}|^2 - |{\m}_{i+1}^n|^2 + |{\m}_i^{n+1}|^2 - |{\m}_i^n|^2 - 2{\m}_i^{n+1} \cdot {\m}_{i+1}^{n+1} + 2{\m}_i^n \cdot {\m}_{i+1}^n \right) = 0,
\end{aligned}
\end{align*}
where $|\m|=1$ has been used. 

The Crank-Nicolson method is sometimes introduced with the midpoint-type formulation \cref{eq-8} and the trapezoidal approach below,
\begin{align}\label{eq-8-tr}
    \frac{\m^{n+1}-\m^n}{\Delta t}=\frac12 \left[ (\m\times \Delta_h \m)^{n+1}+(\m\times \Delta_h \m)^n\right].
\end{align}
For the CN scheme \cref{eq-8-tr}, we can write as below,
\begin{align}\label{eq-11}
    \m^{n+1}+\frac{\Delta t}{2}(\m^{n+1}\times \Delta_h \m^{n+1})=\m^n-\frac{\Delta t}{2}(\m^n\times \Delta_h \m^n),
\end{align}
which can be rewrite as a component form,
\begin{align}\label{eq-13}
    \begin{pmatrix}
        u_i^{n+1}\\
        v_i^{n+1}\\
        w_i^{n+1}
    \end{pmatrix}+\frac{\Delta t}{2}\begin{pmatrix}
        v_i\Delta_h w_i-w_i\Delta_h v_i\\
        w_i\Delta_h u_i-u_i\Delta_h w_i\\
        u_i\Delta_h v_i-v_i\Delta_h u_i
    \end{pmatrix}^{n+1}=b_i^n,\quad \text{ for } i=1,2,\cdots,N_x,
\end{align}
where ${\bm b}=\m^n-\frac{\Delta t}{2}(\m^n\times \Delta_h \m^n)$. Note that
\begin{align}
v_i \Delta_h w_i - w_i \Delta_h v_i 
&= v_i \frac{w_{i-1} - 2w_i + w_{i+1}}{h_x^2} - w_i \frac{v_{i-1} - 2v_i + v_{i+1}}{h_x^2} \notag \\
&= v_i \frac{w_{i-1} + w_{i+1}}{h_x^2} - w_i \frac{v_{i-1} + v_{i+1}}{h_x^2} \notag \\
&= v_i \tilde{\Delta}_h w_i - w_i \tilde{\Delta}_h v_i, \quad \text{for } i = 1, \dots, N_x,
\label{eq:cancel1}
\end{align}
where \(\tilde{\Delta}_h w_i = (w_{i-1} + w_{i+1})/h_x^2\). Similarly, we have
\begin{align}
w_i \Delta_h u_i - u_i \Delta_h w_i &= w_i \tilde{\Delta}_h u_i - u_i \tilde{\Delta}_h w_i,
\label{eq:cancel2} \\
u_i \Delta_h v_i - v_i \Delta_h u_i &= u_i \tilde{\Delta}_h v_i - v_i \tilde{\Delta}_h u_i.
\label{eq:cancel3}
\end{align}

This cancelation stabilizes the scheme. By Eqs.~\eqref{eq:cancel1}, \eqref{eq:cancel2} and \eqref{eq:cancel3} we rewrite the above equation:
\begin{align}
A_i \begin{pmatrix}
u_i^{n+1} \\
v_i^{n+1} \\
w_i^{n+1}
\end{pmatrix}
= \begin{pmatrix}
\alpha_i \\
\beta_i \\
\gamma_i
\end{pmatrix},
\end{align}
where
\begin{align}
A_i = 
\begin{pmatrix}
1 & \frac{\Delta t}{2} \tilde{\Delta}_h w_i^{n+1} & -\frac{\Delta t}{2} \tilde{\Delta}_h v_i^{n+1} \\
-\frac{\Delta t}{2} \tilde{\Delta}_h w_i^{n+1} & 1 & \frac{\Delta t}{2} \tilde{\Delta}_h u_i^{n+1} \\
\frac{\Delta t}{2} \tilde{\Delta}_h v_i^{n+1} & -\frac{\Delta t}{2} \tilde{\Delta}_h u_i^{n+1} & 1
\end{pmatrix}
=
\begin{pmatrix}
1 & c & -b \\
-c & 1 & a \\
b & -a & 1
\end{pmatrix}
\end{align}
and \((\alpha_i, \beta_i, \gamma_i)^T\) is the right-hand side term in \cref{eq-13}. Then using Cramer's rule, we obtain
\begin{align}
(u_i^{n+1}, v_i^{n+1}, w_i^{n+1}) = 1/|A_i| \bigl(|A_{i,1}|, |A_{i,2}|, |A_{i,3}|\bigr), \quad i = 1, \dots, N_x,
\end{align}
where \(A_{i,j}\) is obtained by replacing the \(j\)-th column of \(A_i\) with \((\alpha_i, \beta_i, \gamma_i)^T\).
\begin{align*}
|A_i| &= 1 + a^2 + b^2 + c^2, \\
|A_{i,1}| &= \alpha_i(1 + a^2) - \beta_i(c - ab) + \gamma_i(ac + b), \\
|A_{i,2}| &= \alpha_i(ab + c) + \beta_i(1 + b^2) - \gamma_i(a - bc), \\
|A_{i,3}| &= \alpha_i(ac - b) + \beta_i(a + bc) + \gamma_i(1 + c^2).
\end{align*}

We can rewrite \cref{eq-11} as a matrix form:
\[
\begin{pmatrix}
1 & c & -b \\
-c & 1 & a \\
b & -a & 1
\end{pmatrix}
\mathbf{m}_i^{n+1} = \boldsymbol{\phi}_i^n,
\]
where
\[
a = \frac{\Delta t}{2} \tilde{\Delta}_h u_i^{n+1}, \quad
b = \frac{\Delta t}{2} \tilde{\Delta}_h v_i^{n+1}, \quad \text{and} \quad
c = \frac{\Delta t}{2} \tilde{\Delta}_h w_i^{n+1}.
\]

Indeed, when we consider the scheme without cancellation. We rewrite \cref{eq-13}.
\[
A_i
\begin{pmatrix}
u_i^{n+1} \\
v_i^{n+1} \\
w_i^{n+1}
\end{pmatrix}
= \boldsymbol{\phi}_i^n
\quad \text{for } i = 1, \dots, N_x.
\]

To discuss the stability of the Crank--Nicolson scheme, we compute the characteristic polynomial of \(A_i\).
\[
\det(A_i - \lambda I) = (1 - \lambda)^3 + (1 - \lambda)(a^2 + b^2 + c^2).
\]

The three eigenvalues of \(A_i\) are
\[
\lambda_1 = 1, \quad
\lambda_2 = 1 + \mathrm{i}\sqrt{a^2 + b^2 + c^2}, \quad
\text{and} \quad
\lambda_3 = 1 - \mathrm{i}\sqrt{a^2 + b^2 + c^2}.
\]
Thus, the three eigenvalues of \(A_i^{-1}\) are
\[
\gamma_1 = 1/\lambda_1, \quad \gamma_2 = 1/\lambda_2, \quad \text{and} \quad \gamma_3 = 1/\lambda_3.
\]

The absolute values of three eigenvalues of \(A_i^{-1}\) are
\[
|\gamma_1| = 1, \quad |\gamma_2| = |\gamma_3| = \frac{1}{\sqrt{1 + a^2 + b^2 + c^2}} \leq 1.
\]

Without cancellation, \(a\), \(b\), and \(c\) are small compared to 1, on the other hand, with cancelation \(a^2 + b^2 + c^2 \approx \mathcal{O}(1/h_x^4)\).
Therefore, \(1/\sqrt{1 + a^2 + b^2 + c^2} \approx \mathcal{O}(h_x^2) \ll 1\) and this makes the iterations stable. In short, the CN scheme is stable.

For the truncation error of \cref{eq-8}, Finally, we show that the truncation error of the scheme is second order in time and space. Let \(u, v, w\) be the exact solution of the LL equation without damping term and a forcing term. Then, the local truncation error of the first component of the equations is
\begin{align*}
\tau_i^{n+\frac{1}{2}}
&= \frac{u_i^{n+1} - u_i^n}{\Delta t}
+ \frac{v_i^{n+1} + v_i^n}{2} \Delta_h\left( \frac{w_i^{n+1} + w_i^n}{2} \right)
- \frac{w_i^{n+1} + w_i^n}{2} \Delta_h\left( \frac{v_i^{n+1} + v_i^n}{2} \right) \\
&= (u_t)_i^{n+\frac{1}{2}} + O(\Delta t^2)
+ \left( v_i^{n+\frac{1}{2}} + O(\Delta t^2) \right)\left( (w_{xx})_i^{n+\frac{1}{2}} + O(h^2) + O(\Delta t^2) \right) \\
&\quad - \left( w_i^{n+\frac{1}{2}} + O(\Delta t^2) \right)\left( (v_{xx})_i^{n+\frac{1}{2}} + O(h^2) + O(\Delta t^2) \right) \\
&= \left( u_t + v w_{xx} - w v_{xx} \right)_i^{n+\frac{1}{2}} + O(h^2) + O(\Delta t^2).
\end{align*}

Since \(u, v, w\) is the solution of the differential equation so
\[
\left( u_t + v w_{xx} - w v_{xx} \right)_i^{n+\frac{1}{2}} = 0.
\]

Therefore, the principal part of the local truncation error is
\[
\tau_i^{n+\frac{1}{2}} = O(h^2) + O(\Delta t^2).
\]

For the second and third components of the equations, we get same results.

For the stability analysis of \cref{eq-8-tr}, we take $\e^n=\m^n-\m(t^n)$. Substituting the exact solution \(\boldsymbol{m}(t^{n+1})\) into the scheme, the truncation error \(\boldsymbol{\tau}^n\) is defined as
\[
\boldsymbol{m}(t^{n+1}) + \frac{\Delta t}{2}\left(\boldsymbol{m}(t^{n+1}) \times \Delta_h \boldsymbol{m}(t^{n+1})\right)
= \boldsymbol{m}(t^n) - \frac{\Delta t}{2}\left(\boldsymbol{m}(t^n) \times \Delta_h \boldsymbol{m}(t^n)\right) + \boldsymbol{\tau}^n.
\]
Subtracting the exact equation from the discrete scheme, we get
\[
\boldsymbol{e}^{n+1} + \frac{\Delta t}{2}\left(\boldsymbol{m}^{n+1} \times \Delta_h \boldsymbol{e}^{n+1} + \boldsymbol{e}^{n+1} \times \Delta_h \boldsymbol{m}(t^{n+1})\right)
= \boldsymbol{e}^n - \frac{\Delta t}{2}\left(\boldsymbol{m}^n \times \Delta_h \boldsymbol{e}^n + \boldsymbol{e}^n \times \Delta_h \boldsymbol{m}(t^n)\right) + \boldsymbol{\tau}^n.
\]
Taking the \(L^2\) inner product with \(\boldsymbol{e}^{n+1}\) and using the orthogonality \(\langle \boldsymbol{a} \times \boldsymbol{b}, \boldsymbol{a} \rangle = 0\), we derive
\[
\|\boldsymbol{e}^{n+1}\|_{L^2}^2 \leq \|\boldsymbol{e}^n\|_{L^2}^2 + C\Delta t\left(\|\boldsymbol{\tau}^n\|_{L^2}^2 + \|\boldsymbol{e}^n\|_{L^2}^2\right).
\]
By Gronwall's inequality,
\[
\|\boldsymbol{e}^n\|_{L^2} \leq C e^{CT}\left(\|\boldsymbol{e}^0\|_{L^2} + \sqrt{\Delta t}\sum_{k=0}^{n-1}\|\boldsymbol{\tau}^k\|_{L^2}\right)
\leq C(T)\left(\|\boldsymbol{e}^0\|_{L^2} + \Delta t^2 + h^2\right).
\]
If $\|\boldsymbol{e}^0\|_{L^2} =0$, we have $\|\boldsymbol{e}^n\|_{L^2} = O(\Delta t^2 + h^2)$. By the Lax equivalence theorem, consistency and stability imply convergence. As \(\Delta t, h \to 0\),
\[
\max_{0 \leq n \leq T/\Delta t}\|\boldsymbol{m}^n - \boldsymbol{m}(t^n)\|_{L^2} \to 0,
\]
with second-order accuracy in both time and space:
\[
\|\boldsymbol{m}^n - \boldsymbol{m}(t^n)\|_{L^2} = O(\Delta t^2 + h^2).
\]
\end{remark}

\subsection{Consistency of our proposed scheme}
Considering a scheme below,
\begin{align*}
    \frac{\m_h^n-\m_h^n}{\Delta t}=-\frac{\m_h^{n+1}+\m_h^n}{2}\times \Delta_h \g_h,
\end{align*}
where $\g_h^s=(I-\Delta t \Delta_h)^{-1}\m_h^s$, $s=n,n+1$.

To analyze several specific case, we take
\begin{align*}
    \frac{\m_h^n-\m_h^n}{\Delta t}=-\frac{\m_h^{n+1}+\m_h^n}{2}\times \Delta_h \g_h,
\end{align*}
where $\g_h^n=(I-\Delta t \Delta_h)^{-1}\m_h^n$. The truncation error \(\tau_h^n\) is defined as the residual when substituting the exact solution \(\boldsymbol{m}(t)\) into the discrete scheme:
\[
\tau_h^n = \frac{\boldsymbol{m}(t_{n+1}) - \boldsymbol{m}(t_n)}{\Delta t} + \frac{\boldsymbol{m}(t_{n+1}) + \boldsymbol{m}(t_n)}{2} \times \Delta_h \boldsymbol{g}(t_n),
\]
where \(\boldsymbol{g}(t_n) = (I - \Delta t \Delta_h)^{-1} \boldsymbol{m}(t_n)\), and we assume \(\Delta_h\) is a consistent approximation of the continuous Laplacian \(\Delta\), i.e., \(\Delta_h \boldsymbol{m} = \Delta \boldsymbol{m} + \mathcal{O}(h^2)\) for smooth \(\boldsymbol{m}\).
We expand the exact solution \(\boldsymbol{m}(t_{n+1})\) around \(t_n\) using Taylor's formula:
\[
\boldsymbol{m}(t_{n+1}) = \boldsymbol{m}(t_n) + \Delta t \partial_t \boldsymbol{m}(t_n) + \frac{(\Delta t)^2}{2} \partial_t^2 \boldsymbol{m}(t_n) + \mathcal{O}((\Delta t)^3).
\]
Substituting into the time difference:
\[
\frac{\boldsymbol{m}(t_{n+1}) - \boldsymbol{m}(t_n)}{\Delta t} = \partial_t \boldsymbol{m}(t_n) + \frac{\Delta t}{2} \partial_t^2 \boldsymbol{m}(t_n) + \mathcal{O}((\Delta t)^2).
\]
Similarly,
\[
\frac{\boldsymbol{m}(t_{n+1}) + \boldsymbol{m}(t_n)}{2} = \m(t_n)+\frac{\Delta t}{2} \partial_t \boldsymbol{m}(t_n) + \frac{(\Delta t)^2}{4} \partial_t^2 \boldsymbol{m}(t_n) + \mathcal{O}((\Delta t)^3).
\]
The operator \((I - \Delta t \Delta_h)^{-1}\) can be expanded via Neumann series (for small \(\Delta t\)):
\[
(I - \Delta t \Delta_h)^{-1} = I + \Delta t \Delta_h + (\Delta t)^2 \Delta_h^2 + \mathcal{O}((\Delta t)^3).
\]
Thus,
\[
\boldsymbol{g}(t_n) = \boldsymbol{m}(t_n) + \Delta t \Delta_h \boldsymbol{m}(t_n) + \mathcal{O}((\Delta t)^2+h^2).
\]
Applying \(\Delta_h\) (consistent with \(\Delta\)):
\[
\Delta_h \boldsymbol{g}(t_n) = \Delta_h \boldsymbol{m}(t_n) + \Delta t \Delta_h^2 \boldsymbol{m}(t_n) + \mathcal{O}((\Delta t)^2+h^2) = \Delta \boldsymbol{m}(t_n) + \mathcal{O}(h^2 + \Delta t).
\]
Substitute the expansions into \(\tau_h^n\):
\[
\begin{aligned}
\tau_h^n &= \left( \partial_t \boldsymbol{m}(t_n) + \frac{\Delta t}{2} \partial_t^2 \boldsymbol{m}(t_n) \right) + \left( \m(t_n)+\frac{\Delta t}{2} \partial_t \boldsymbol{m}(t_n) + \mathcal{O}((\Delta t)^2) \right) \times \left( \Delta \boldsymbol{m}(t_n) + \mathcal{O}(h^2 + \Delta t) \right) \\
&\quad = \left[ \partial_t \boldsymbol{m} + \boldsymbol{m} \times \Delta \boldsymbol{m} \right]_{t=t_n}+\mathcal{O}( h^2+\Delta t) \\
&= \mathcal{O}( h^2+\Delta t),
\end{aligned}
\]
where the second term is the exact PDE (assuming the continuous equation is \(\partial_t \boldsymbol{m} = -\boldsymbol{m} \times \Delta \boldsymbol{m}\)).

Considering another specific implicit scheme below,
\begin{align}\label{eq-im}
    \frac{\m_h^{n+1}-\m_h^n}{\Delta t}=-\frac{\m_h^{n+1}+\m_h^n}{2}\times \Delta_h \g_h^{n+1},
\end{align}
where $\g_h^{n+1}=(I-\Delta t \Delta_h)^{-1}\m_h^{n+1}$.
The local truncation error is given by
\[
\tau_h^n = \frac{\boldsymbol{m}(t_{n+1}) - \boldsymbol{m}(t_n)}{\Delta t} + \frac{\boldsymbol{m}(t_{n+1}) + \boldsymbol{m}(t_n)}{2} \times \Delta_h \boldsymbol{g}(t_{n+1}),
\]
Let $\boldsymbol{m}(t)$ be the exact solution of the PDE. Expand $\boldsymbol{m}(t_{n+1})$ around $t_n$ using Taylor series:
\[
\boldsymbol{m}(t_{n+1}) = \boldsymbol{m}(t_n) + \Delta t \frac{\partial \boldsymbol{m}}{\partial t}(t_n) + \frac{(\Delta t)^2}{2} \frac{\partial^2 \boldsymbol{m}}{\partial t^2}(t_n) + \frac{(\Delta t)^3}{6} \frac{\partial^3 \boldsymbol{m}}{\partial t^3}(t_n) + \mathcal{O}((\Delta t)^4),
\]
\[
\boldsymbol{m}(t_{n+1}) + \boldsymbol{m}(t_n) = 2\boldsymbol{m}(t_n) + \Delta t \frac{\partial \boldsymbol{m}}{\partial t}(t_n) + \frac{(\Delta t)^2}{2} \frac{\partial^2 \boldsymbol{m}}{\partial t^2}(t_n) + \mathcal{O}((\Delta t)^3).
\]
For the auxiliary variable $\boldsymbol{g}$, we expand the inverse operator in a Neumann series (valid for small $\Delta t$):
\[
(I - \Delta t \Delta_h)^{-1} = I + \Delta t \Delta_h + (\Delta t)^2 \Delta_h^2 + \mathcal{O}((\Delta t)^3),
\]
thus:
\[
\boldsymbol{g} = \boldsymbol{m} + \Delta t \Delta_h \boldsymbol{m} + (\Delta t)^2 \Delta_h^2 \boldsymbol{m} + \mathcal{O}((\Delta t)^3),
\]
\[
\Delta \boldsymbol{g} = \Delta \boldsymbol{m} + \Delta t \Delta_h^2 \boldsymbol{m} + \mathcal{O}((\Delta t)^2).
\]
Substitute the Taylor expansions into the definition of $\tau^n$:

First, rewrite the time difference term:
\[
\frac{\boldsymbol{m}(t_{n+1}) - \boldsymbol{m}(t_n)}{\Delta t} = \frac{\partial \boldsymbol{m}}{\partial t}(t_n) + \frac{\Delta t}{2} \frac{\partial^2 \boldsymbol{m}}{\partial t^2}(t_n) + \frac{(\Delta t)^2}{6} \frac{\partial^3 \boldsymbol{m}}{\partial t^3}(t_n) + \mathcal{O}((\Delta t)^3).
\]

Second, rewrite the average term:
\[
\frac{\boldsymbol{m}(t_{n+1}) + \boldsymbol{m}(t_n)}{2} = \boldsymbol{m}(t_n) + \frac{\Delta t}{2} \frac{\partial \boldsymbol{m}}{\partial t}(t_n) + \frac{(\Delta t)^2}{4} \frac{\partial^2 \boldsymbol{m}}{\partial t^2}(t_n) + \mathcal{O}((\Delta t)^3).
\]

Third, expand $\Delta_h \boldsymbol{g}(t_{n+1})$ (assuming $\Delta_h$ is consistent with $\Delta$):
\[
\Delta_h \boldsymbol{g}(t_{n+1}) = \Delta_h \boldsymbol{m}(t_{n+1}) + \Delta t \Delta_h^2 \boldsymbol{m}(t_{n+1}) + \mathcal{O}((\Delta t)^2+h^2) = \Delta_h \boldsymbol{m}(t_n)  + \mathcal{O}(\Delta t+h^2).
\]

Substitute these into $\tau^n$:
\begin{align*}
    \tau_h^n &= \left( \frac{\partial \boldsymbol{m}}{\partial t}(t_n) + \frac{\Delta t}{2} \frac{\partial^2 \boldsymbol{m}}{\partial t^2}(t_n) + \mathcal{O}((\Delta t)^2) \right) + \left( \boldsymbol{m}(t_n) + \frac{\Delta t}{2} \frac{\partial \boldsymbol{m}}{\partial t}(t_n) + \mathcal{O}((\Delta t)^2) \right) \times \left( \Delta_h \boldsymbol{m}(t_n) +  \mathcal{O}(\Delta t+h^2) \right)\\
    &=(\partial_t \m+\m\times \Delta_h\m)|_{t=t_n}+O(\Delta t+h^2).
\end{align*}

Now we consider our proposed method, and note that
\begin{align*}
    \frac{\m_h^n-\m_h^n}{\Delta t}=-\frac{\m_h^{n+1}+\m_h^n}{2}\times \Delta_h \g_h^s,
\end{align*}
where $\g_h^{s}=(I-\Delta t \Delta_h)^{-1}\m_h^{s}$, $s=n,n+1$. Such a method with a Gauss-Seidel type iteration to update the $g_h$. From previous local truncation analysis for explicit and implicit schemes, we can directly derive that
\begin{align*}
    \tau_h^n=O(\Delta t+h^2).
\end{align*}

\subsection{Stability of our proposed scheme}
For the step 1, we denote $A$ for its coefficient matrix, and note that the three eigenvalue for $A_i$ and $A_i^{-1}$ are all $1$, so the step 1 is stable.

For step 2, we denote $A$ for its coefficient matrix without any generality. To be specific,
\begin{align}
    A=\begin{pmatrix}
        1 & c_n&-b_{n}\\
        -c_n &1 &a_{n+1}\\
        0 &0&1
    \end{pmatrix}
\end{align}
Note that $\det(A-\lambda I)=(1-\lambda)[(1-\lambda)^2+c_n^2]$, we have the eigenvalues of $A$ below,
\begin{align*}
    \lambda_1=1,\quad \lambda_{2,3}=1\pm ic_n.
\end{align*}
Then $|\lambda_1|=1$, $|\lambda_2|=|\lambda_3|=\sqrt{1+c_n^2}$. Thus, the three eigenvalues of $A_i^{-1}$ are 
\begin{align*}
    \gamma_1=1/\lambda_1,\quad \gamma_2=1/\lambda_2,\quad \gamma_3=1/\lambda_3.
\end{align*}
The absolute values of three eigenvalues of $A_i^{-1}$ are 
\begin{align*}
    |\gamma_1|=1,\quad |\gamma_2|=|\gamma_3|=\frac{1}{\sqrt{1+c_n^2}}\le 1.
\end{align*}
Here $c_n$ is small compared to $1$.

For step 3, we denote $A$ for its coefficient matrix without any generality. To be specific,
\begin{align}
    A=\begin{pmatrix}
        1 & c_n&-b_{n+1}\\
        -c_n &1 &a_{n+1}\\
        b_{n+1} &-a_{n+1} &1
    \end{pmatrix}
\end{align}
Note that
\begin{align*}
    \det(A-\lambda I)=(1-\lambda)^3+(1-\lambda)(a_{n+1}^2+b_{n+1}^2+c_n^2)=(1-\lambda)[(1-\lambda)^2+a_{n+1}^2+b_{n+1}^2+c_n^2].
\end{align*}
Thus the eigenvalues of $A$ are
\begin{align*}
    \lambda_1=1,\quad \lambda_2=1+i\sqrt{a_{n+1}^2+b_{n+1}^2+c_n^2},\quad \lambda_3=1-i\sqrt{a_{n+1}^2+b_{n+1}^2+c_n^2}.
\end{align*}
Then $|\lambda_1|=1$, $|\lambda_2|=|\lambda_3|=\sqrt{1+a_{n+1}^2+b_{n+1}^2+c_n^2}$. Thus, the three eigenvalues of $A_i^{-1}$ are 
\begin{align*}
    \gamma_1=1/\lambda_1,\quad \gamma_2=1/\lambda_2,\quad \gamma_3=1/\lambda_3.
\end{align*}
The absolute values of three eigenvalues of $A_i^{-1}$ are 
\begin{align*}
    |\gamma_1|=1,\quad |\gamma_2|=|\gamma_3|=\frac{1}{\sqrt{1+a_{n+1}^2+b_{n+1}^2+c_n^2}}\le 1.
\end{align*}
Here $a_{n+1},b_{n+1},c_n$ is small compared to $1$.

\begin{remark}
We first define explicit and implicit methods using the general form of time integration for a differential equation:
\[
\frac{du}{dt} = f(t, u), \quad u(t_0) = u_0
\]
where $u(t)$ is the solution variable, $t$ is time, and $f(\cdot)$ is the right-hand side (RHS) function.

Explicit methods compute the next time step $u_{n+1}$ using only \textit{known} values from previous steps ($u_n, u_{n-1}, \dots$):
\[
u_{n+1} = u_n + \Delta t \cdot F_{\text{explicit}}(t_n, u_n)
\]
The most common example is the \textbf{Forward Euler Method}:
\[
u_{n+1} = u_n + \Delta t \cdot f(t_n, u_n)
\]
where $\Delta t = t_{n+1} - t_n$ is the time step size.

Implicit methods compute $u_{n+1}$ using \textit{unknown} values at the next time step ($u_{n+1}$):
\[
u_{n+1} = u_n + \Delta t \cdot F_{\text{implicit}}(t_{n+1}, u_{n+1})
\]
The classic example is the \textbf{Backward Euler Method}:
\[
u_{n+1} = u_n + \Delta t \cdot f(t_{n+1}, u_{n+1})
\]
This requires solving an algebraic equation (linear/nonlinear) for $u_{n+1}$ at each step.

Stability is the most critical advantage of implicit methods. We illustrate this with the linear test problem:
\[
\frac{du}{dt} = \lambda u, \quad \lambda \in \mathbb{C}, \text{Re}(\lambda) < 0
\]

Applying Forward Euler gives:
\[
u_{n+1} = (1 + \lambda \Delta t) u_n
\]
For stability ($|u_{n+1}| \leq |u_n|$), we require:
\[
|1 + \lambda \Delta t| \leq 1
\]
This imposes a \textit{strict upper bound} on $\Delta t$ (e.g., for $\lambda = -10$, $\Delta t < 0.2$). For stiff problems (large $|\lambda|$), $\Delta t$ must be extremely small.

Applying Backward Euler gives:
\[
u_{n+1} = \frac{1}{1 - \lambda \Delta t} u_n
\]
For stability:
\[
\left| \frac{1}{1 - \lambda \Delta t} \right| \leq 1
\]
This holds for \textit{any positive} $\Delta t$ when $\text{Re}(\lambda) < 0$ (A-stable). Implicit methods have \textit{unconditional stability} for stiff problems, while explicit methods have \textit{conditional stability}.
\end{remark}

For our step 3, the matrix 
\begin{align*}
    A=\begin{pmatrix}
        1 & c &-b\\
        -c & 1 & a\\
        b & -a & 1
    \end{pmatrix}
\end{align*}
Here are more comparison details for stability:
\begin{itemize}
    \item For Crank-Nicolson's scheme, $a, b, c$ directly depends on $\m^{n+1}$, so the implicit way gives great stability (A-stability). However, it has to be a nonlinear system.
    \item If $a,b,c$ directly depends on $\m^n$, we get an explicit scheme with CFL-type conditional stability, even though the lower order term is implicit.
    \item If $a,b,c$ indirectly depends on $\m^n$, say implicit treatment by solving a heat diffusion equation, we get a slightly better CFL-condition.
    \item Our method design a Gauss-Seidel type iteration to update the $m^{n+1}$, thus $a, b,c$ can be formed as a mixed way to combine $\m^n$ and $\m^{n+1}$. Such a method gives unconditionally stable results.
\end{itemize}

\subsection{Convergence of our proposed scheme}
By the Lax equivalence theorem, consistency and stability imply convergence. As \(\Delta t, h \to 0\),
\[
\max_{0 \leq n \leq T/\Delta t}\|\boldsymbol{m}^n - \boldsymbol{m}(t^n)\|_{L^2} \to 0,
\]
with first-order accuracy in time and second-order accuracy in space:
\[
\|\boldsymbol{m}^n - \boldsymbol{m}(t^n)\|_{L^2} = O(\Delta t + h^2).
\]

Considering the implicit scheme \cref{eq-im}, we denote $e_h^n=\m_h^n-\m_h(t_n)$, and have

\begin{align*}
    \frac{e_h^{n+1}-\e_h^n}{\Delta t}=-\frac{\e_h^{n+1}+\e_h^n}{2}\times \Delta_h \g_h^{n+1}-\frac{\m(t_{n+1})+\m(t_n)}{2}\times \Delta_h (I-\Delta t\Delta_h)^{-1}\e_h^{n+1}+O(\Delta t+h^2)
\end{align*}
Taking the inner product with $\e_h^{n+1}+\e_h^n$, we have
\begin{align*}
    \|\e_h^{n+1}\|^2-\|\e_h^n\|^2=-\Delta t \langle \frac{\m(t_{n+1})+\m(t_n)}{2}\times \Delta_h (I-\Delta t\Delta_h)^{-1}\e_h^{n+1},\e_h^{n+1}+\e_h^n \rangle +O(\Delta t+h^2).
\end{align*}
Indeed, we have
\[
\|e_h^n\|_{L^2} \leq C (\Delta t + h^2),\quad \forall n\in \mathbb{Z}
\]
which will be proved in the future work.

\begin{lemma}[Boundedness of Elliptic Operator]
For any $\Delta t > 0$ and mesh size $h > 0$, the operator $(I - \Delta t \Delta_h)^{-1}$ is uniformly bounded in $L^2(\Omega)$:
\[
\|(I - \Delta t \Delta_h)^{-1} u\|_{L^2} \leq C \|u\|_{L^2},
\]
where $C$ is a constant independent of $\Delta t$ and $h$. Moreover,
\[
\|\Delta_h (I - \Delta t \Delta_h)^{-1} u\|_{L^2} \leq C \|u\|_{L^2}.
\]
\end{lemma}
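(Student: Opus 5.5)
The plan is a spectral argument using the symmetry and sign of the discrete Laplacian. Under homogeneous Neumann or periodic boundary conditions, with the standard second-order central difference, $-\Delta_h$ is symmetric positive semidefinite with respect to the discrete $L^2$ inner product. This follows from summation by parts, exactly as in the energy computation of the Remark following \cref{eq-6}:
\[
\langle -\Delta_h u,u\rangle=\|\nabla_h u\|^2\ge 0 .
\]
Hence $-\Delta_h$ has an orthonormal eigenbasis $\{\phi_k\}$ with eigenvalues $0\le\lambda_k\le\lambda_{\max}$, where $\lambda_{\max}\le 4d/h^2$. Both $(I-\Delta t\Delta_h)^{-1}$ and $\Delta_h(I-\Delta t\Delta_h)^{-1}$ are functions of $\Delta_h$ and are diagonal in this basis. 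So their $L^2$ operator norms equal the maxima of the moduli of the corresponding scalar multipliers over the spectrum.

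\textbf{First estimate.} The multiplier of $(I-\Delta t\Delta_h)^{-1}$ is $1/(1+\Delta t\lambda_k)\in(0,1]$. Expanding $u=\sum_k u_k\phi_k$ and applying Parseval gives
\[
\|(I-\Delta t\Delta_h)^{-1}u\|_{L^2}\le\|u\|_{L^2},
\]
so $C=1$, uniformly in $\Delta t$ and $h$. An equivalent energy proof avoids eigenvectors: set $g=(I-\Delta t\Delta_h)^{-1}u$ and take the inner product of $g-\Delta t\Delta_h g=u$ with $g$. This yields $\|g\|^2+\Delta t\|\nabla_h g\|^2=\langle u,g\rangle$, and Cauchy--Schwarz then gives $\|g\|\le\|u\|$.

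\textbf{Second estimate.} The multiplier of $\Delta_h(I-\Delta t\Delta_h)^{-1}$ has modulus $\lambda_k/(1+\Delta t\lambda_k)$. This quantity is bounded by $\min(\lambda_{\max},1/\Delta t)$. In energy form, take the inner product of $g-\Delta t\Delta_h g=u$ with $-\Delta_h g$: this gives $\|\nabla_h g\|^2+\Delta t\|\Delta_h g\|^2\le\|u\|\,\|\Delta_h g\|$, hence $\|\Delta_h g\|\le\Delta t^{-1}\|u\|$.

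\textbf{Main obstacle.} The second estimate is where the statement breaks down. The scalar bound is sharp: for the top eigenvector $\phi_{\max}$,
\[
\|\Delta_h(I-\Delta t\Delta_h)^{-1}\phi_{\max}\|=\frac{\lambda_{\max}}{1+\Delta t\lambda_{\max}}\,\|\phi_{\max}\|,
\]
which behaves like $1/\Delta t$ when $\Delta t\gg h^2$ and like $h^{-2}$ when $\Delta t\ll h^2$. A constant independent of both $\Delta t$ and $h$ is therefore impossible. The most I can establish is the corrected statement
\[
\|\Delta_h(I-\Delta t\Delta_h)^{-1}u\|_{L^2}\le\min\Bigl(\frac{1}{\Delta t},\frac{4d}{h^2}\Bigr)\|u\|_{L^2}.
\]
Alternatively, a uniform bound holds in a weaker norm: for $u$ with $\|\Delta_h u\|$ controlled, $\|\Delta_h(I-\Delta t\Delta_h)^{-1}u\|\le\|\Delta_h u\|$, since the multiplier $1/(1+\Delta t\lambda_k)\le 1$ can be applied to $\Delta_h u$. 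This smooth-data version is what the consistency arguments above actually use. I would state it in place of the uniform $L^2\to L^2$ claim, together with the counterexample showing the original cannot hold.
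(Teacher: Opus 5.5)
Your analysis is correct, and on the second inequality it is more accurate than the paper. For the first estimate you use the same spectral argument as the paper, which bounds the resolvent norm by $1/(1+\Delta t\,\inf\sigma(-\Delta_h))\le 1$. You observe that $-\Delta_h$ is only positive \emph{semi}definite under Neumann or periodic conditions, since constants lie in its kernel. This corrects the paper's ``positive definite'' without changing the bound $C=1$. Your energy identity $\|g\|^2+\Delta t\|\nabla_h g\|^2=\langle u,g\rangle$ is a clean alternative that avoids eigenvectors.

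For the second estimate, the paper's proof is the one-line claim $\|\Delta_h(I-\Delta t\Delta_h)^{-1}\|=\|(I-\Delta t\Delta_h)^{-1}(-\Delta_h)\|\le C$. This is exactly the step your top-eigenvector computation refutes. The factorization only yields $\|(I-\Delta t\Delta_h)^{-1}\|\,\|\Delta_h\|\le\lambda_{\max}$, which is $O(h^{-2})$ by the paper's own spectral bound. The exact norm $\lambda_{\max}/(1+\Delta t\lambda_{\max})$ is of order $h^{-2}$ even under the coupling $\Delta t=h^2$ used in the paper's 3D tests. So the statement as written is false. Your bound $\min(1/\Delta t,4d/h^2)$, sharp up to a constant factor, is the correct replacement.

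One caution about your closing remark. The smooth-data bound $\|\Delta_h(I-\Delta t\Delta_h)^{-1}u\|\le\|\Delta_h u\|$ does cover the consistency expansions. But the lemma is presumably meant for the error equation of the convergence subsection, where the operator acts on $e_h^{n+1}$. The error has no such smoothness, so only the $1/\Delta t$ bound is available there. That bound cancels the factor $\Delta t$ in front of the term, leaving a per-step contribution of size $C\|e_h\|^2$ rather than $C\Delta t\|e_h\|^2$. That is not enough for the naive Gronwall argument, so your replacement lemma does not rescue that use.
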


\begin{proof}
Since $-\Delta_h$ is positive definite with spectral bound $\sigma(-\Delta_h) \leq C h^{-2}$, we have
\[
\|(I - \Delta t \Delta_h)^{-1}\| = \sup_{\|u\|=1} \frac{1}{\|(I - \Delta t \Delta_h)u\|} \leq \frac{1}{1 + \Delta t \cdot \inf\sigma(-\Delta_h)} \leq C.
\]
The boundedness of $\Delta_h (I - \Delta t \Delta_h)^{-1}$ follows from $\|\Delta_h (I - \Delta t \Delta_h)^{-1}\| = \|-(I - \Delta t \Delta_h)^{-1}(-\Delta_h)\| \leq C$.
\end{proof}



\section{Numerical experiments}
\label{sec:experiments}

In this section, we proceed the accuracy, stability and norm preserving test. We choose different initial conditions to have the robustness of such a method.

\subsection{Accuracy and efficiency tests}

To simplify the accuracy verification, we set the exact solution given for the governing model \cref{eq-5}. Analytical exact solutions are derived for both one-dimensional (1D) and three-dimensional (3D) scenarios to serve as benchmarks for error quantification.

For the 1D case, the exact magnetization solution \(\m_e\) is:
\begin{equation*}
\m_e=\left(\cos(\cos(\pi x))\sin t, \sin(\cos(\pi x))\sin t, \cos t\right)^T,
\end{equation*}
while the corresponding 3D exact solution is:
\begin{equation*}
\m_e=\left(\cos(XYZ)\sin t, \sin(XYZ)\sin t, \cos t\right)^T,
\end{equation*}
where $X=x^2(1-x)^2$, $Y=y^2(1-y)^2$ and $Z=z^2(1-z)^2$.

These exact solutions satisfy the governing equation \cref{eq-5} when the forcing term is defined as \(\f_e=\partial_t \m_e+\m_e \times \Delta \m_e+\alpha\m_e\times (\m_e \times \Delta \m_e)\). They also comply with the homogeneous Neumann boundary condition, ensuring consistency with simulation constraints.

To isolate the temporal approximation error from spatial discretization effects, the spatial resolution in the 1D test is fixed at \(h=5\times 10^{-4}\)—a sufficiently fine grid that makes spatial error negligible compared to temporal error. The Gilbert damping parameter is set to \(\alpha=0.01\), and simulations run until final time \(T=0.1\). Under this configuration, the measured error primarily reflects temporal discretization inaccuracy.

The 3D temporal accuracy test faces inherent constraints from spatial resolution, as excessively fine grids incur prohibitive computational cost. To balance spatial and temporal error contributions, we adopt a coordinated refinement strategy for spatial mesh sizes (\(h_x, h_y, h_z\)) and temporal step-size (\(\Delta t\)) tailored to the proposed method's order: \(\Delta t=h_x^2=h_y^2=h_z^2=h^2=T/N_0\). Here, \(N_0\) is a refinement level parameter, with specific values given in subsequent results. Consistent with the 1D test, \(\alpha=0.01\), and the final time \(T\) is specified later. The first order temporal accuracy is verified from \cref{tab-a-v-3} and \cref{tab-a-10-time-3D-Q-2} for 1D and 3D tests, respectively. 

\begin{table}[htbp]
	\centering
	\caption{The temporal accuracy in 1D test for proposed method with damping $\alpha=0.01$when $h = 5D-4$, $T=1d-1$.}\label{tab-a-v-3}
	\begin{tabular}{|c|c|c|c|}
		\hline
		$k$ & $\|\m_h-\m_e\|_\infty$ & $\|\m_h-\m_e\|_2$ &$\|\m_h-\m_e\|_{H^1}$\\
		\hline
		$T/80$ &0.001304094971804&8.500523347099678e-04&0.006116653503286 \\
		$T/120$ &8.684032607750442e-04&5.745020596842719e-04&0.004123837837786 \\
		$T/160$ &6.505721097687933e-04&4.340035210414707e-04&0.003112167088311 \\
		$T/240$ &4.330118558566187e-04&2.915444972028109e-04&0.002089104867675 \\
		$T/320$ &3.244330910497223e-04&2.195304920518377e-04&0.001572755516106 \\
		\hline 
		order &1.003609279663207&0.976857797102320&0.979916584127108\\
		\hline
	\end{tabular}
\end{table}

\begin{table}[htbp]
	\centering
	\caption{The spatial accuracy in 1D test for proposed method with damping $\alpha=0.01$ when $k= 1D-6$, $T=1d-1$.}\label{tab-a-10-space-Q-1}
	\begin{tabular}{|c|c|c|c|}
		\hline
		$h$ & $\|\m_h-\m_e\|_\infty$ & $\|\m_h-\m_e\|_2$ &$\|\m_h-\m_e\|_{H^1}$ \\
		\hline
		1/16 &4.225596750053739e-04&2.896508432807531e-04&0.002209985483017 \\
		1/24 &1.885253776899853e-04&1.286680596306939e-04&9.768130592826686e-04\\
		1/32 &1.062644247209338e-04&7.252233752489178e-05&5.480246619562391e-04\\
		1/48 &4.739270964135289e-05
		&3.248282203669158e-05&2.426999432605687e-04\\
		1/64 &2.676411577153676e-05&1.848377745692578e-05&1.360493878062051e-04\\
		\hline 
		order &1.990738385102109&1.985405237927322&2.010529053514131 \\
		\hline
	\end{tabular}
\end{table}

Following temporal accuracy evaluation, spatial accuracy tests were conducted to quantify the spatial discretization performance of the proposed method. To prevent temporal errors from interfering with spatial accuracy assessment, the temporal step size was fixed at a sufficiently small \(k=10^{-6}\) for 1D test shown in \cref{tab-a-10-space-Q-1}—making temporal errors negligible compared to spatial errors. We take $k=h^2$, such that the second order spatial accuracy is observed, which is consistent with 1D test.

\begin{table}[htbp]
	\centering
	\caption{The temporal accuracy and spatial accuracy for the proposed method with damping $\alpha=0.01$, $T=0.1$ with $k=h^2$ in 3D. }\label{tab-a-10-time-3D-Q-2}
	\begin{tabular}{|c|c|c|c|c|}
		\hline
		$k$&$h$ & $\|\m_h-\m_e\|_\infty$ & $\|\m_h-\m_e\|_2$ &$\|\m_h-\m_e\|_{H^1}$ \\
	\hline
	T/10&1/10&5.006365255465495e-04&2.886424573026357e-04&3.499752082858884e-04 \\
	T/40 &1/20&1.264524159770852e-04&7.243063478581653e-05&1.290889967619724e-04 \\
	T/57 &1/24&8.912305801656029e-05&5.101958217642231e-05&1.062422830734351e-04\\
	T/78&1/28&6.546837939636063e-05&3.751573538129243e-05&9.169167058822856e-05 \\
	T/102 &1/32&5.037225571857817e-05&2.895218634093047e-05&8.242737703221056e-05 \\
	\hline 
	order &&0.989524179094233&0.991610312809092&0.635976416720683\\
		\hline
	&	order &1.973909640037754&1.978077601727147&1.268736008455239\\
		\hline
	\end{tabular}
\end{table}

\subsection{Norm preserving tests}
We choose the initial condition with below,
\begin{align*}
    \m_0=\left(\cos(\cos(\pi x))\sin (0.01), \sin(\cos(\pi x))\sin (0.01), \cos (0.01)\right)^T,
\end{align*}
and \begin{align*}
\m_0=\left(\cos(XYZ)\sin (0.01), \sin(XYZ)\sin (0.01), \cos (0.01)\right)^T,
\end{align*}
in 1D and 3D, respectively. The results are presented in \cref{tab-8} and \cref{tab-9}.

\begin{table}[htbp]
	\centering
	\caption{The proposed method with damping $\alpha=0.01$ when $h = 5D-4$, $T=1d-1$ for the norm preserving test in 1D.}\label{tab-8}
	\begin{tabular}{|c|c|}
		\hline
		$k$ & $\|\|\m_h\|_2-1\|_\infty$ \\
		\hline
	    2.0D-2 & 1.110223024625157e-15 \\
		1.0D-2 & 2.331468351712829e-15 \\
		5.0D-3 & 2.886579864025407e-15\\
		2.5D-3 & 3.996802888650564e-15 \\
		1.25D-3 & 5.995204332975845e-15 \\
		6.25D-4 & 8.881784197001252e-15 \\
		3.125D-4 & 1.165734175856414e-14\\
		\hline
	\end{tabular}
\end{table}

\begin{table}[htbp]
	\centering
	\caption{The norm preserving test for the proposed method with damping $\alpha=0.01$, $T=0.1$ with $k=h^2$ in 3D. }\label{tab-9}
	\begin{tabular}{|c|c|c|}
		\hline
		$k$&$h$ & $\|\|\m_h\|_2-1\|_\infty$  \\
	\hline
	T/10&1/10& 5.748734821509061e-13\\
	T/40 &1/20&4.660716257376407e-13 \\
	T/57 &1/24&4.194422587033841e-13 \\
	T/78&1/28& 3.410605131648481e-13\\
	\hline 
	\end{tabular}
\end{table}

\subsection{Initial conditions and stability tests}

In this section, we choose different initial conditions specified in each test.

In 1D, we specify the initial condition as
\begin{align*}
    \m_0&=\left(\cos(x^2(1-x)^2)\sin (0.01), \sin(x^2(1-x)^2)\sin (0.01), \cos (0.01)\right)^T\\
    \m_0&=\left(\cos(\cos(\pi x))\sin (0.01), \sin(\cos(\pi x))\sin (0.01), \cos (0.01)\right)^T.
\end{align*}

The results for 1D case using the proposed method to show the numerical profiles are presented in \Cref{fig:1} with the parameters $\alpha=0.01$, $N_x=2000$ and $N_t=5$. 

In 3D, we choose the initial conditions below,
\begin{align*}
\m_0&=\left(\cos(x^2(1-x)^2)\sin (0.01), \sin(x^2(1-x)^2)\sin (0.01), \cos (0.01)\right)^T\\
    \m_0&=\left(\cos(\cos(\pi x))\sin (0.01), \sin(\cos(\pi x))\sin (0.01), \cos (0.01)\right)^T\\
    \m_0&=\left(\cos(\cos(\cos(\pi x)))\sin (\pi x+0), \sin(\cos(\cos(\pi x)))\sin (\pi x+0), \cos (\pi x+0)\right)^T\\
\end{align*}
The results for those initial conditions are presented in \Cref{fig:2-2}, \Cref{fig:2} and \Cref{fig:2-1} which verify the consistency.


\begin{figure}[htbp]
    \centering
    \subfloat[$m_1$]{\includegraphics[width=0.5\linewidth]{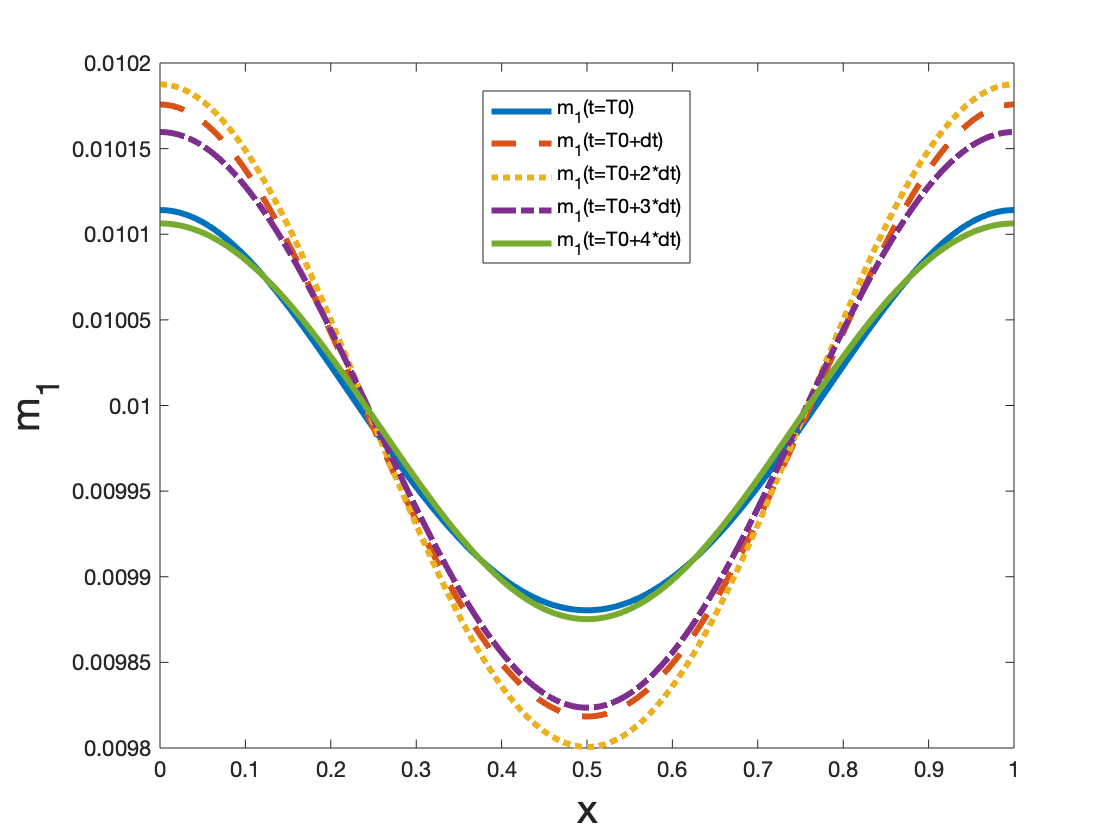}}
    \subfloat[$m_1$]{\includegraphics[width=0.5\linewidth]{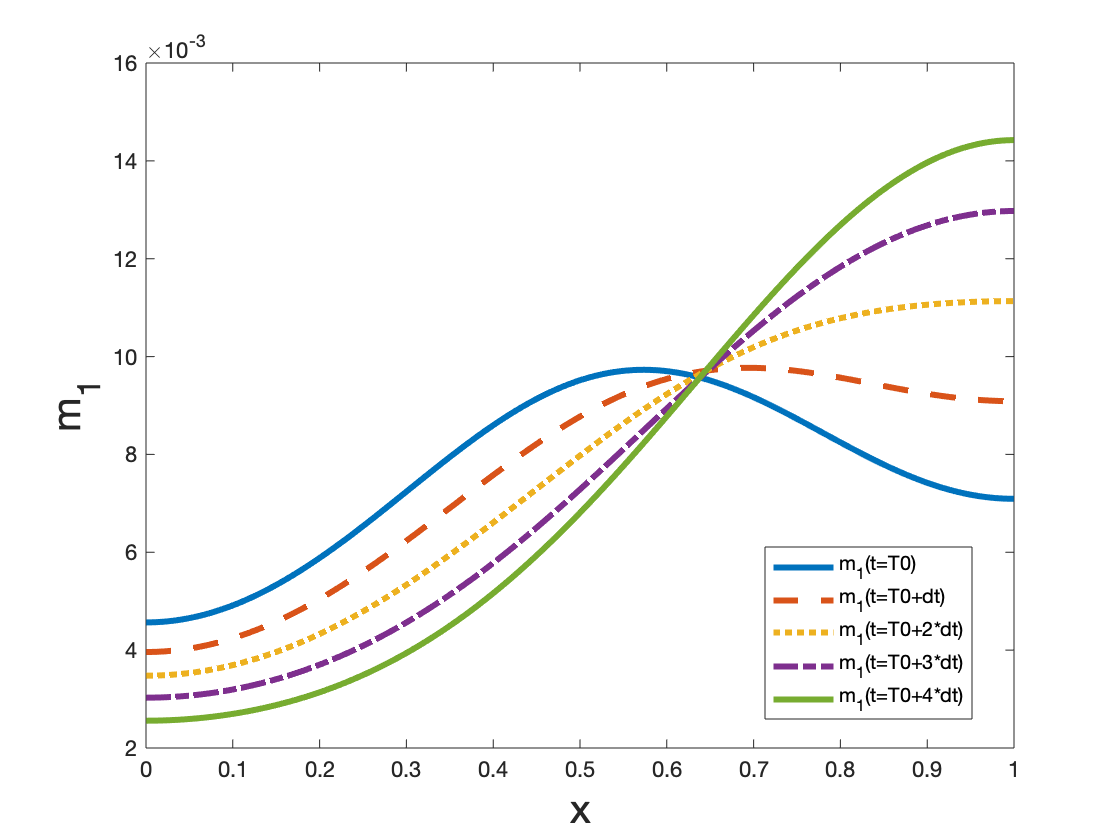}}
    \hspace{0.1in}
    \subfloat[$m_2$]{\includegraphics[width=0.5\linewidth]{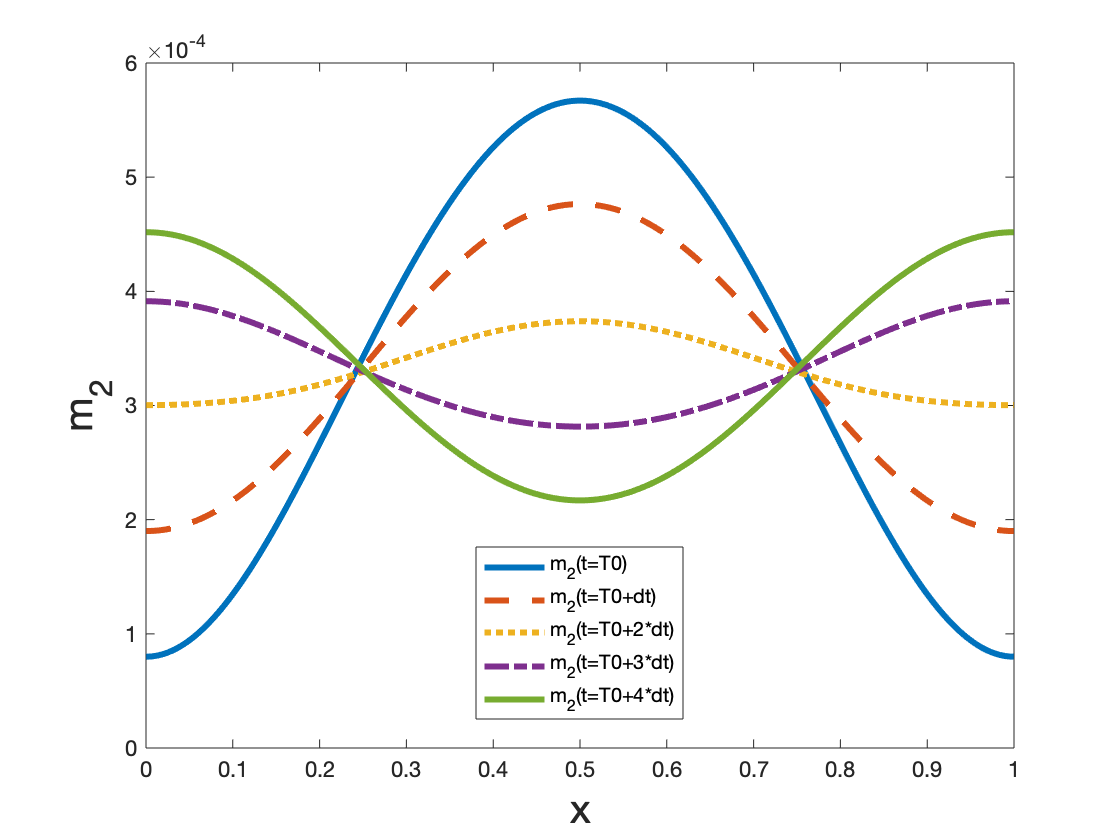}}
    \subfloat[$m_2$]{\includegraphics[width=0.5\linewidth]{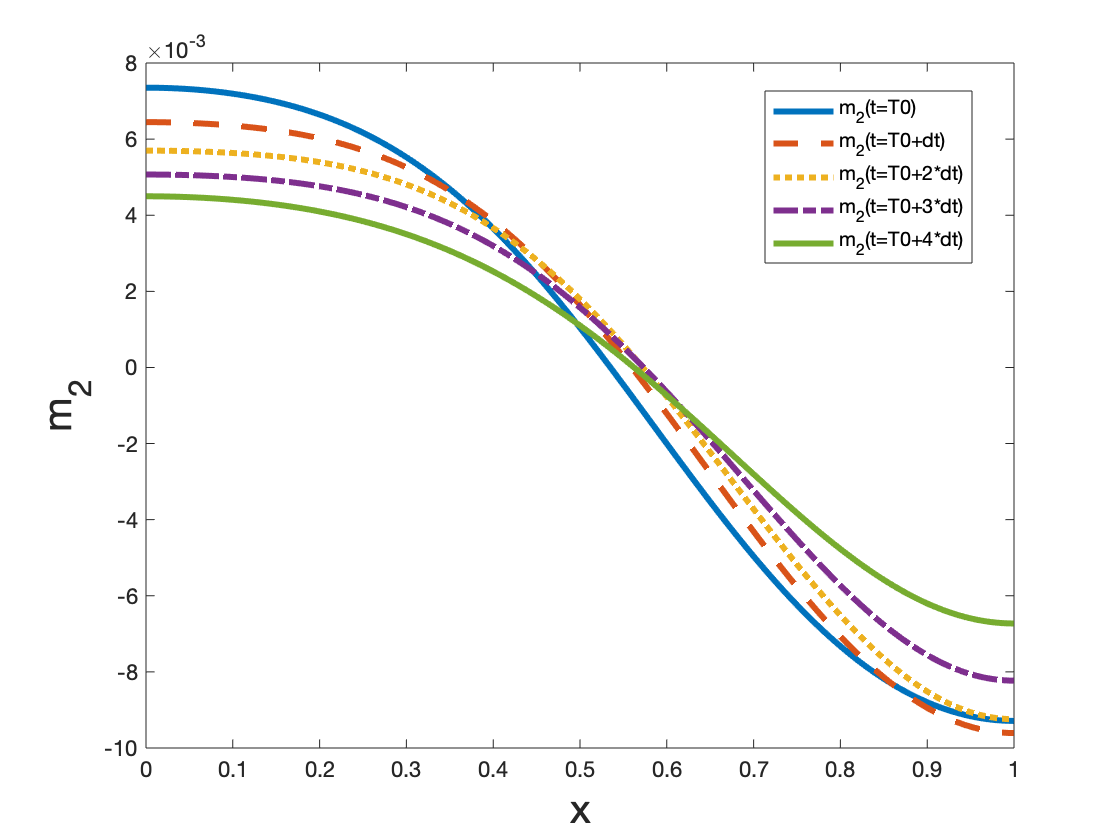}}
    \hspace{0.1in}
    \subfloat[$m_3$]{\includegraphics[width=0.5\linewidth]{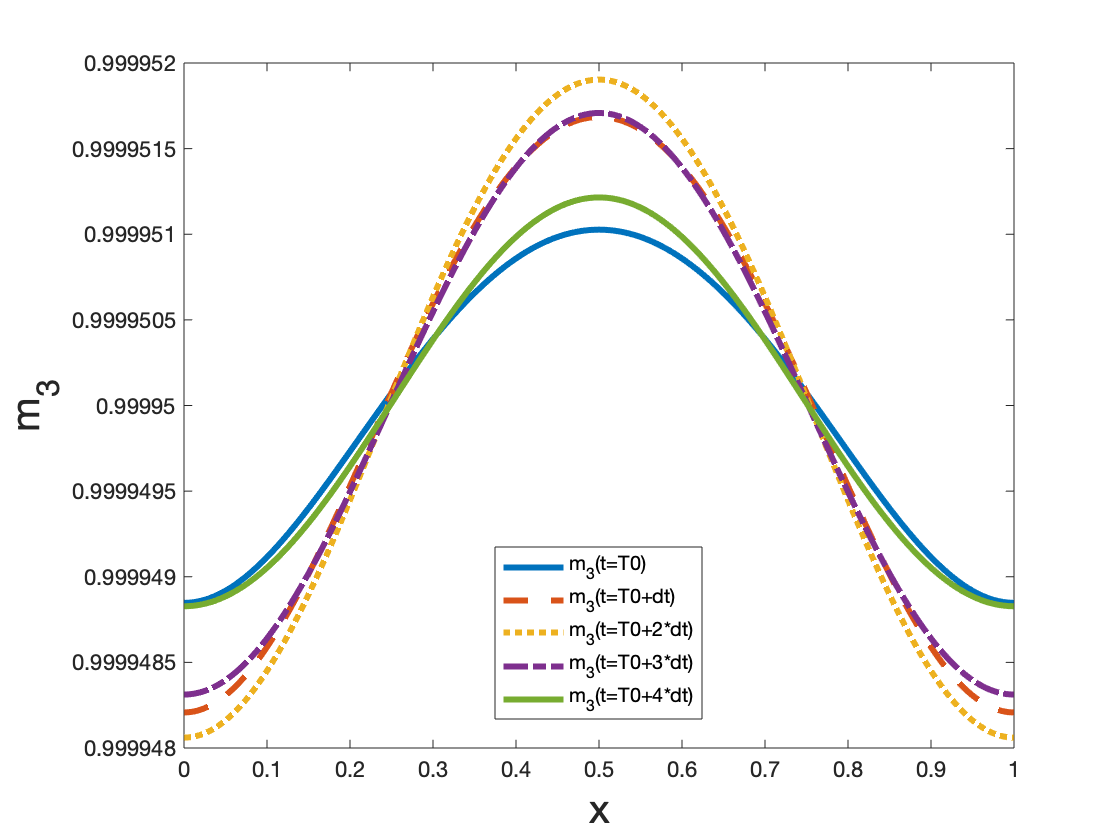}}
    \subfloat[$m_3$]{\includegraphics[width=0.5\linewidth]{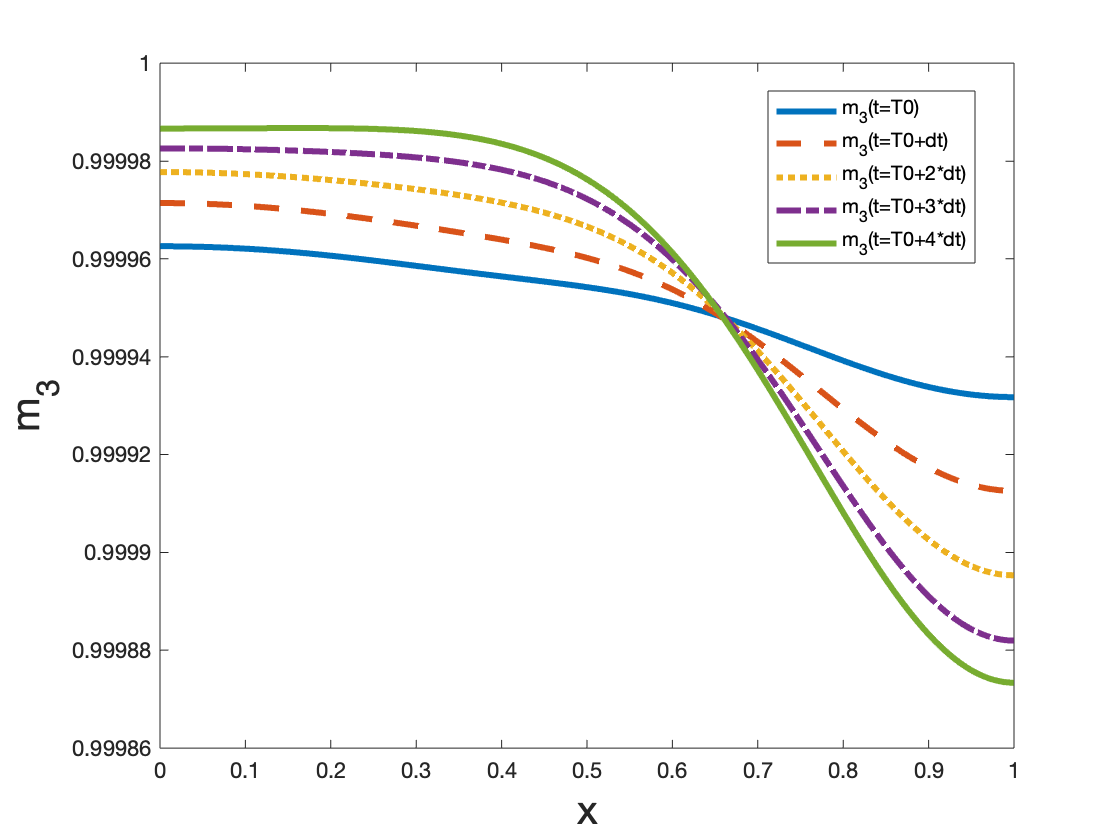}}
    \caption{The solution profile using proposed method in 1D given the initial condition $m_0$ with $T0$ specified without source term, $\alpha=0.01$ and $T=0.1$, $N_x=2000$, $N_t=5$.}
    \label{fig:1}
\end{figure}

\begin{figure}[htbp]
    \centering
    \subfloat[arrow profile]{\includegraphics[width=0.5\linewidth]{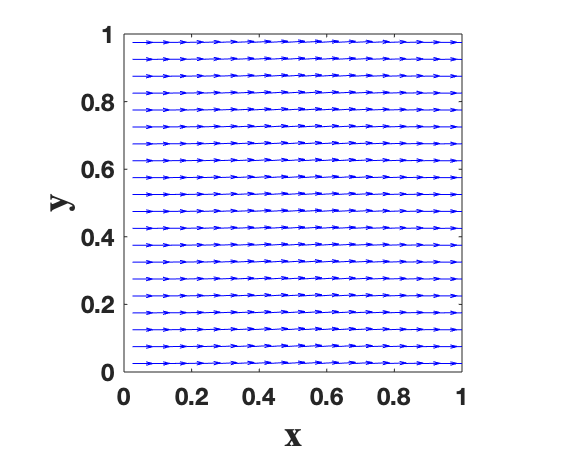}}
    \subfloat[angle profile]{\includegraphics[width=0.53\linewidth]{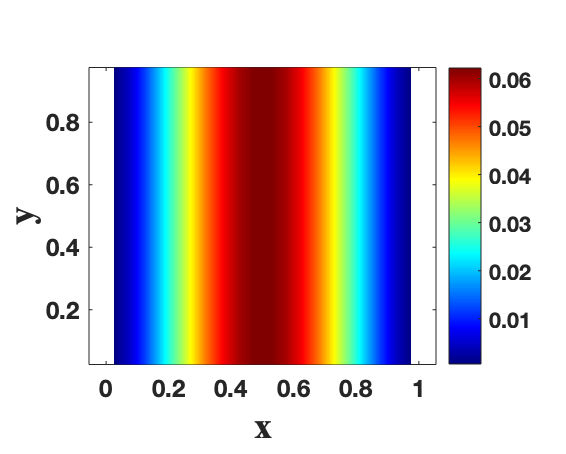}}
    \hspace{0.1in}
    \subfloat[arrow profile]{\includegraphics[width=0.5\linewidth]{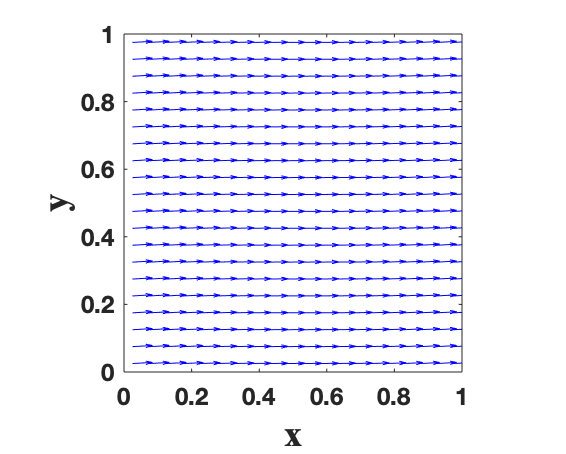}}
    \subfloat[angle profile]{\includegraphics[width=0.53\linewidth]{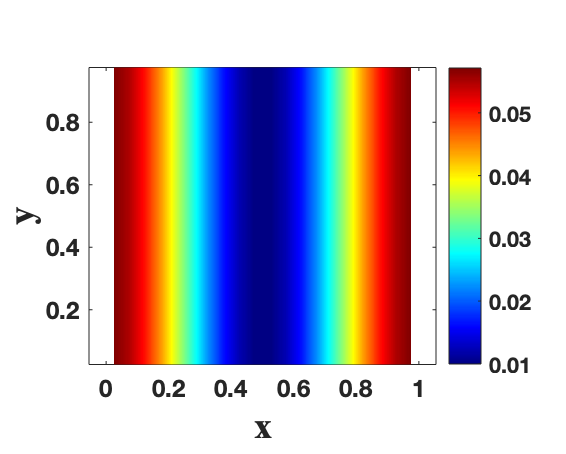}}
      \caption{The solution profile using the proposed methods in 3D given the initial condition $m_0$ with initial condition specified without source term, $\alpha=0$ and $T=0.1$, $N_x=N_y=N_z=20$, $N_t=40$. Top row with initial condition; Bottom row with proposed method. Initial condition given: $\m_0=[\cos(x^2(1-x)^2)\sin(0.01),\sin(x^2(1-x)^2)\sin(0.01),\cos(0.01)]$.}
    \label{fig:2-2}
\end{figure}

\begin{figure}[htbp]
    \centering
    \subfloat[arrow profile]{\includegraphics[width=0.5\linewidth]{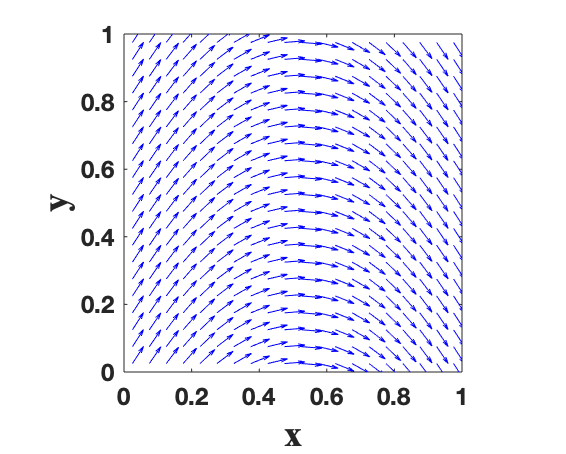}}
    \subfloat[angle profile]{\includegraphics[width=0.53\linewidth]{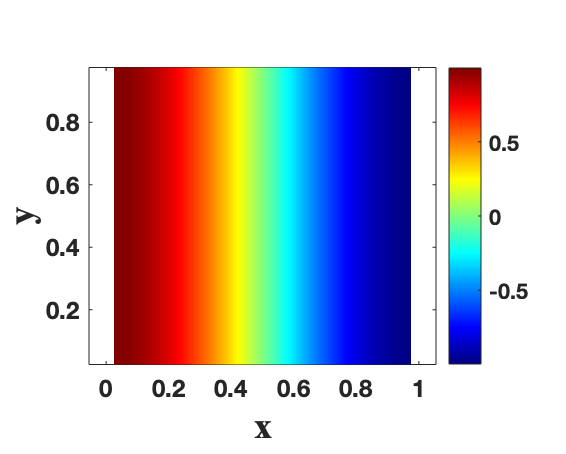}}
    \hspace{0.1in}
    \subfloat[arrow profile]{\includegraphics[width=0.5\linewidth]{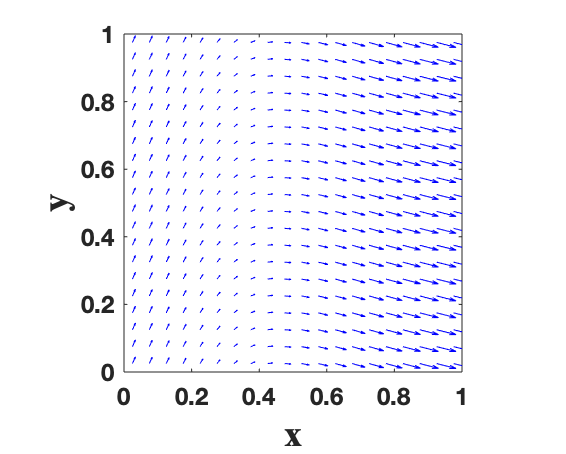}}
    \subfloat[angle profile]{\includegraphics[width=0.53\linewidth]{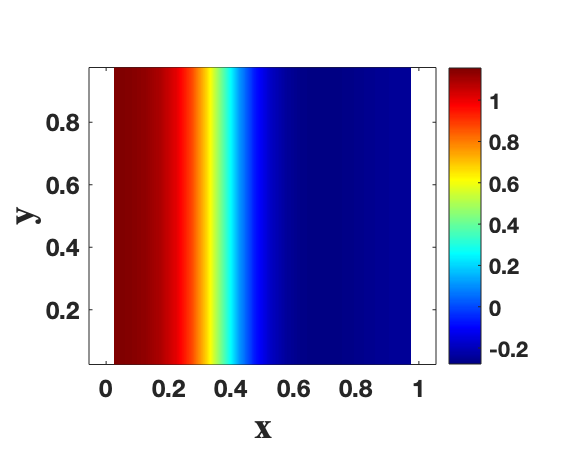}}
      \caption{The solution profile using the proposed methods in 3D given the initial condition $m_0$ with initial condition specified without source term, $\alpha=0$ and $T=0.1$, $N_x=N_y=N_z=20$, $N_t=40$. Top row with initial condition; Bottom row with proposed method. Initial condition given: $\m_0=[\cos(\cos(\pi x))\sin(0.01),\sin(\cos(\pi x))\sin(0.01),\cos(0.01)]$.}
    \label{fig:2}
\end{figure}

\begin{figure}[htbp]
    \centering
    \subfloat[arrow profile]{\includegraphics[width=0.5\linewidth]{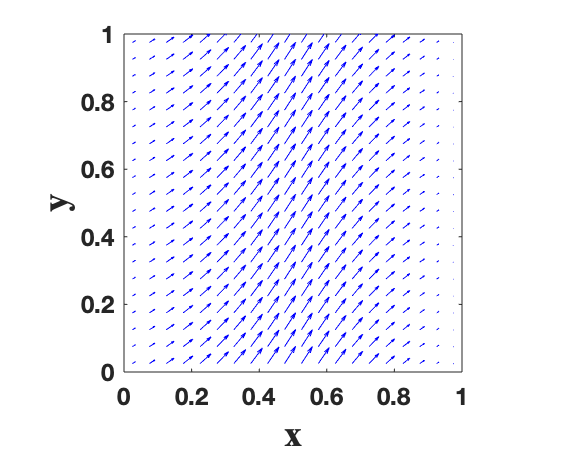}}
    \subfloat[angle profile]{\includegraphics[width=0.53\linewidth]{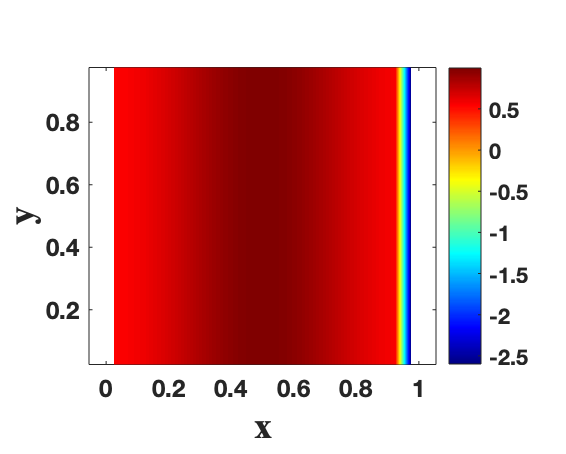}}
    \hspace{0.1in}
    \subfloat[arrow profile]{\includegraphics[width=0.5\linewidth]{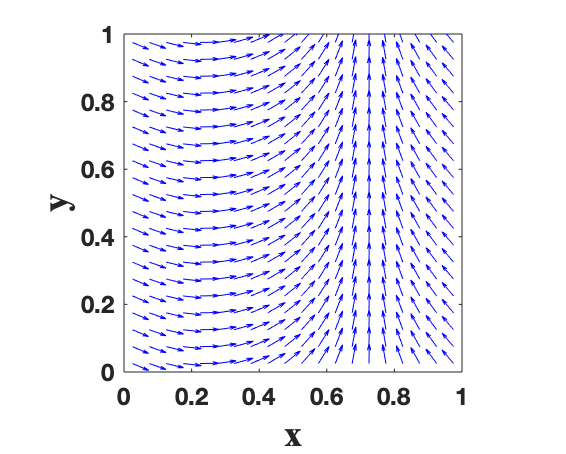}}
    \subfloat[angle profile]{\includegraphics[width=0.53\linewidth]{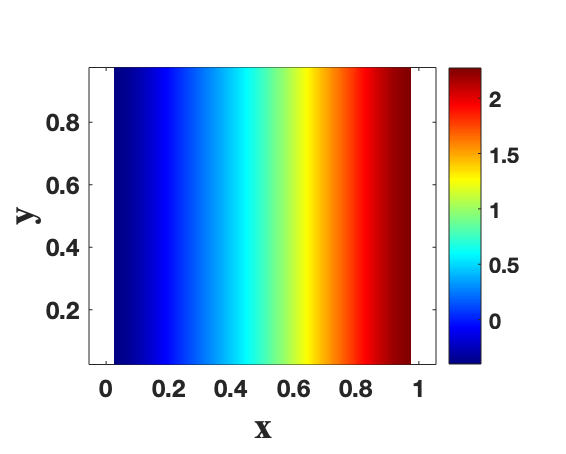}}
      \caption{The solution profile using the proposed methods in 3D given the initial condition $m_0$ with initial condition specified without source term, $\alpha=0$ and $T=0.1$, $N_x=N_y=N_z=20$, $N_t=40$. Top row with initial condition; Bottom row with proposed method. Initial condition given: $\m_0=[\cos(\cos(\cos(\pi x)))\sin(\pi x+T0),\sin(\cos(\cos(\pi x)))\sin(\pi x+T0),\cos(\pi x+T0)]$.}
    \label{fig:2-1}
\end{figure}

\section{Conclusions and discussions}
\label{sec:conclusions}

In this paper, we propose a structure preserving method with the first order accuracy in time and the second order accuracy in space. Such a method use a Crank-Nicolson's implicit method with the Gauss-Seidel iteration. Such a method preserves the norm constraint. Such a method is constructed only based on the equation, not using a projection step. Implicit Crank-Nicolson's method involves a nonlinear system, however, our proposed method is linear and stable. In the future work, we will analyze the stability of such a method and apply for micromagnetics simulations. We will modify such a method to be a finite element methods for the space discretization. The proposed method is stable, length preserving, accurate. Meanwhile, the convergence analysis and stability analysis for the proposed method along with the normalizing step can be proved. The implicit Crank-Nicolson's method has great theoretical value, such as stability, norm preserving, however, our approach is linear and expected to have some of the good properties of the implicit Crank-Nicolson's method.

\section*{Acknowledgments}
This work is supported in part by the Jiangsu Science and Technology Programme-Fundamental Research Plan Fund (BK20250468), Research and
Development Fund of Xi'an Jiaotong Liverpool University (RDF-24-01-015).

\vspace{1cm}

\bibliographystyle{elsarticle-num-names}
\bibliography{references.bib}

@article{cimrak2007survey,
  title={A survey on the numerics and computations for the {Landau-Lifshitz} equation of micromagnetism},
  author={I. Cimr{\'a}k},
  journal={Arch. Comput. Methods Eng.},
  volume={15},
  number={3},
  pages={277--309},
  year={2008},
  publisher={Springer}
}

@article{kruzik2006recent,
  title={Recent developments in the modeling, analysis, and numerics of ferromagnetism},
  author={M. Kruz{\'\i}k and A. Prohl},
  journal={SIAM Rev.},
  volume={48},
  number={3},
  pages={439--483},
  year={2006},
  publisher={SIAM}
}

@article{Landau1935On,
  title={On the theory of the dispersion of magnetic permeability in ferromagnetic bodies},
  author={L.D. Landau and E.M. Lifshits},
  journal={Phys. Z. Sowjet.},
  volume={63},
  number={9},
  pages={153-169},
  year={1935},
}

@article{Gilbert:1955,
  author = {T.L. Gilbert},
  journal = {Phys. Rev.},
  tile ={},
  volume = {100},
  pages = {1243},
  year = {1955},
  note = {[Abstract only; full report, Armor Research Foundation Project No.
    A059, Supplementary Report, May 1, 1956 (unpublished)]}
}

@book{Brown1963micromagnetics,
  title={Micromagnetics},
  author={ W.F. {Brown}},
  year={1963},
  publisher={ Interscience Tracts on Physics and Astronomy. Interscience
Publishers ({John Wiley} and Sons), New York-London},
}

@article{JEONG2010613,
title = {A {Crank–Nicolson} scheme for the {Landau–Lifshitz} equation without damping},
journal = {Journal of Computational and Applied Mathematics},
volume = {234},
number = {2},
pages = {613-623},
year = {2010},
issn = {0377-0427},
doi = {https://doi.org/10.1016/j.cam.2010.01.002},
url = {https://www.sciencedirect.com/science/article/pii/S0377042710000063},
author = {Darae Jeong and Junseok Kim},
}

\end{document}